\documentclass[reqno,11pt]{amsart}

\usepackage{amssymb,amsmath}
\usepackage[all]{xy}
\usepackage{caption}
\usepackage[pdftex]{hyperref}
\usepackage{graphicx}
\usepackage{array}
\usepackage{physics}
\usepackage{color}
\usepackage{xy}
\usepackage{bm}
\usepackage{mathtools}

\usepackage {tikz}
\usetikzlibrary {automata, positioning}
\definecolor {processblue}{cmyk}{0.96,0,0,0}

\usepackage{geometry}
\allowdisplaybreaks

\newtheorem{thmIntro}{Theorem}
\newtheorem{thm}{Theorem}[section]
\newtheorem{lem}[thm]{Lemma}

\newtheorem{cor}[thm]{Corollary}
\theoremstyle{definition}
\newtheorem{defn}[thm]{Definition}
\newtheorem{ex}[thm]{Example}
\newtheorem{rem}[thm]{Remark}

\newcommand{\DD}{{\mathbb D}}

\newcommand{\NN}{{\mathbb N}}

\newcommand{\ZZ}{{\mathbb Z}}
\newcommand{\RR}{{\mathbb R}}

\newcommand{\cA}{{\mathcal{A}}}
\newcommand{\la}{\langle}
\newcommand{\ra}{\rangle}

\DeclareMathOperator{\id}{Id}
\DeclareMathOperator{\Id}{Id}

\makeatletter
\@namedef{subjclassname@2020}{%
  \textup{2020} Mathematics Subject Classification}
\makeatother

\newcommand{\eps}{\varepsilon}
\newcommand{\into}{\hookrightarrow}
\newcommand{\p}{\partial}
\newcommand{\om}{\omega}
\newcommand{\wt}{\widetilde}
\newcommand{\wh}{\widehat}
\newcommand{\R}{\mathbb{R}}
\newcommand{\Z}{\mathbb{Z}}
\newcommand{\N}{\mathbb{N}}

\newcommand{\cG}{\mathcal{G}}
\newcommand{\FF}{\mathcal{F}}
\newcommand{\std}{\mathrm{std}}

\newcommand{\Int}{\mathrm{Int}\,}
\newcommand{\Ham}{\mathrm{Ham}}
\newcommand{\Symp}{\mathrm{Symp}}
\newcommand{\Diff}{\mathrm{Diff}}

\newcommand{\abstr}{\mathrm{abstr}}

\title{Computable functions as Reeb flows}

\author{Kai Cieliebak}
\address{Department of Mathematics, Universität Augsburg, Universitätsstraße 14, 86159 Augsburg, Germany.}\email{Kai.Cieliebak@math.uni-augsburg.de }
\thanks{The three authors are supported by the Bilateral AEI–DFG project Celestial Mechanics, Hydrodynamics, and Turing Machines / Himmelsmechanik, Hydrodynamik und Turing-Maschinen (AQUACELL), with reference codes PCI2024-155042-2 and PCI2024-155062-2, and Deutsche Forschungsgemeinschaft (DFG) Projektnummer 541525489.
AGP has also been supported by projects PID2024-156578NB-I00 and PID2021-124440NB-I00, funded by MICIU/AEI/10.13039/501100011033 and by FEDER/EU. EM is supported by the Catalan Institution for Research and Advanced Studies through an ICREA Academia Prize 2021. She is also supported by the Spanish State Research Agency through the Severo Ochoa and María de Maeztu Program for Centers and Units of Excellence in R\&D, project CEX2020-001084-M, and by grant PID2023-146936NB-I00, funded by MICIU/AEI/10.13039/501100011033 and by ERDF/EU}
\author{{\'A}ngel Gonz{\'a}lez-Prieto}
\address{Department of Algebra, Geometry and Topology,
Universidad Complutense de Madrid,
Plaza Ciencias 3,
28040 Madrid, Spain \& Instituto de Ciencias Matemáticas (CSIC-UAM-UCM-UC3M), C.\ Nicolás Cabrera 13-15, 28049 Madrid, Spain.}
\email{angelgonzalezprieto@ucm.es}

\author{Eva Miranda}
\address{Laboratory de Geometria i Topologia \& SYMCREA, Departament de Matemàtiques \& IMTECH, Universitat Politècnica de Catalunya, Avinguda del Dr Marañón 44-50, 08028 Barcelona, Spain.}\email{eva.miranda@upc.edu}
\begin{document}

\begin{abstract}
We prove that, given any contact $3$-manifold and any computable
function $f: \mathbb{N} \dashrightarrow \mathbb{N}$, there exists a
defining contact form and a Poincar\'e 
section of its Reeb flow whose partially defined return map computes $f$.
\end{abstract}

\maketitle

\section{Introduction}

The realization of computation within continuous dynamical systems is
a very active research topic at the crossroads of logic, topology, and
analysis. Despite its seemingly simple nature, computation is known to
exhibit highly complex dynamics, including chaos, mixing, and
undecidability. For this reason, the ability to simulate computability
within a family of continuous dynamical systems provides strong
evidence that very intricate dynamics can arise. 

The most celebrated model of computation is given by the notion of a
\emph{Turing machine}. This is an abstraction of a computer consisting
of a simple read-write head inspecting a two-sided tape of symbols. At
each step of the computation, the head reads one symbol and, according
to the symbol read and its internal state, replaces it with another
symbol, transitions to another internal state, and shifts either to
the right or to the left along the tape in order to continue the
read-write process. This procedure continues until the machine reaches
one of its halting states, leaving the result of the computation
written on the tape. Turing machines were conceived as a way to
formalize the notion that a partially defined function $f: \N
\dashrightarrow \NN$ can be effectively computed by a mechanical
algorithm. Such a function $f$ is said to be \emph{computable} if
there exists a Turing machine $M$ such that, when we write any $n \in
\N$ on a tape in binary and run $M$ on it, then $M$ halts if and only
if $f(n)$ is defined, in which case the result of the computation is
$f(n)$ written in binary on the tape. From this perspective, $M$ is
the ``algorithm'' computing $f$, the piece of code that is executed to
calculate $f$ mechanically. 

Despite their apparent dependence on Turing machines, computable
functions are first-class citizens in computability theory. As Kleene
proved in \cite{kleene1936general}, they can be fully characterized as
partial recursive functions through a set of axioms independent of the
notion of Turing machines. In fact, in line with the
Church--Turing thesis, computable functions turn out to be models of
computation fully equivalent to Turing machines.
In this spirit,
there exist computable functions with very complex behaviour, such as
universal functions, capable of simulating any other computable
function, or undecidable functions, for which there exists no
algorithm able to determine their domain of definition. 

Partially defined functions also arise naturally in continuous
dynamical systems. Consider a manifold $M$ with a smooth vector field
$X$ and a {\em Poincar\'e section} $D\subset M$, i.e., an embedded
$2$-disk transverse to $X$. This defines a partially defined
\emph{first return map} $\Phi: D \dashrightarrow D$ as follows: if the
forward $X$-orbit of $x \in D$ hits $D$ again we set $\Phi(x)$ to be
the first return point; otherwise, $\Phi(x)$ is undefined. In
particular, if we embed $\NN \hookrightarrow D$ via the binary
expansion of an integer encoded in the square Cantor set
(see~\S\ref{ss:Cantor}), then we obtain a partially defined function
$\Phi|_\N: \N \dashrightarrow \N$.  

A particularly interesting family of dynamical systems arises from
contact geometry. Given a $3$-dimensional manifold $M$, a
\emph{contact form} is a $1$-form $\alpha \in \Omega^1(M)$ such that
$\alpha \wedge d\alpha \neq 0$. This contact form induces a maximally
non-integrable coorientable distribution $\xi = \ker \alpha$ of
planes, called the \emph{contact structure}, as well as a unique
smooth vector field $R$ characterized by $\alpha(R) = 1$ and
$d\alpha(R,\cdot) = 0$, called the \emph{Reeb field}. Reeb dynamics is
a very active area of research, with important milestones such as
Taubes' proof of the Weinstein conjecture~\cite{Taubes07}, and the
celebrated ``two or infinity'' theorem by Hofer, Wysocki, and
Zehnder~\cite{HoferWysockiZehnder1998}. 

In this setting, the main result of this note is the following.

\begin{thmIntro}\label{thm:main}
Let $(M, \xi)$ be a coorientable contact $3$-manifold. For any
computable function $f: \NN \dashrightarrow \NN$, there exists a
contact form $\beta \in \Omega^1(M)$ with $\xi = \ker \beta$ and a
$2$-dimensional disk $D \subset M$ such that the Reeb flow of
$\beta$ is transverse to $D$ and its first return map $\Phi$
satisfies $\Phi|_\N = f$. 
\end{thmIntro}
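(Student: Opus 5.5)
We follow the standard strategy: first encode $f$ in an area-preserving map of a disk, then realize that map as the return map of a Reeb flow inserted locally into $(M,\xi)$.

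\textbf{Step 1: an area-preserving diffeomorphism computing $f$.} Let $M_f$ be a Turing machine computing $f$. By the Moore/Cardona--Miranda--Peralta-Salas--Presas encoding via generalized shifts, configurations of $M_f$ correspond to points of the square Cantor set $C^2\subset D$ (see \S\ref{ss:Cantor}). One step of $M_f$ acts on finitely many cylinder blocks as a translation composed with the shift, and these blocks have pairwise disjoint images. So we will build a compactly supported, area-preserving diffeomorphism $\phi$ of the open disk that maps each block to its image. We will take $\phi$ to be a composition of finitely many area-preserving "block swapping" maps, each obtained by moving small disjoint squares along Hamiltonian isotopies; disjoint squares of equal area can always be exchanged this way. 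Halting configurations should not be mapped by the step map at all. To arrange this, we make $\phi$ send the halting blocks into a region $H\subset D$, which will later become a "trap".

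\textbf{Step 2: from iteration of $\phi$ to a single return.} The return map must output $f(n)$ after one return, not after many steps. So we plan a Poincaré section consisting of two disks. The first is a small disk $D_0\subset D$ containing $\mathbb{N}$, the input/output encodings. The second, $D_1$, is where the iteration happens. We then let the true Poincaré section be $D=D_0$ and let the flow run between intermediate passes through $D_1$ and never cross $D_0$ except at input and output. An initial encoded $n$ flows from $D_0$ into the iteration region. The flow then iterates $\phi$ through a "plug" where the Reeb flow realizes $\phi$ as a return map on $D_1$. When the halting region is reached, orbits are diverted back through $D_0$, arriving at the encoding of $f(n)$ after the halting configuration has been decoded by another fixed area-preserving map. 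Orbits from non-halting inputs stay forever in the plug region and never return to $D_0$, so $\Phi(n)$ is undefined exactly when $f(n)$ is. Only $\Phi|_{\mathbb{N}}$ matters, so the behaviour of other points is irrelevant, provided they are transverse.

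\textbf{Step 3: Reeb realization inside $M$.} By Darboux, take a standard neighbourhood $U\cong D\times[0,1]$ or a solid torus $D\times S^1$ with contact form $dt+\lambda$. Any compactly supported Hamiltonian symplectomorphism of $(D,d\lambda)$ is the return map of a form $H_t\,dt+\lambda$-type modification. Concretely, for $\psi^*\lambda-\lambda=dS$, glue the mapping torus using the form $dt+\lambda$ and the generating function $S$, with $S$ made positive by adding a constant. This is standard for Reeb mapping tori. We insert this region into $M$ by taking an arbitrary defining form $\alpha$ for $\xi$, rescaling it by a positive function so that it equals $dt+\lambda$ near a Darboux ball, and replacing it there by the modified form. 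This keeps $\ker\beta=\xi$ after an isotopy fixing $\xi$ outside the ball, via Gray stability, or by simply choosing the modification as $g\alpha$ for a positive $g$. The disk $D$ sits inside the Darboux chart, transverse to the Reeb field.

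\textbf{Main obstacle.} The hardest part is Step 2 combined with Step 3: arranging, in a single Reeb vector field conformal to a fixed $\xi$, that orbits from $D$ iterate $\phi$ many times without recrossing $D$ and then return at the right point, while non-halting orbits never return. To handle this, we first try a linear chain of Darboux blocks: an "input" block, a solid torus realizing $\phi$ as a first return to an internal disk $D_1$, and an exit channel opened only over the trap region $H$. We would also check that non-halting orbits, which remain in the torus forever, are compatible with compactness of $M$; this needs care, since a Reeb field has no sinks.
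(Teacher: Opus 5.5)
\textbf{There is a genuine gap in Step~2 and in the part of Step~3 it relies on.} You describe an input block, a solid-torus plug realizing $\phi$ as a return map on $D_1$, and an exit channel over a trap region. But you then list as open exactly what has to be proved, and nothing in the proposal produces a Reeb flow of a form defining the given $\xi$ with this behaviour.

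The sketch also misses a constraint. Suppose orbits leave the plug over an exit region $X\subset D_1$. Then the return map inside the plug is $\phi|_{D_1\setminus X}$, whose image misses exactly $\phi(X)$. So the entrance channel must feed precisely $\phi(X)$, the encoded inputs must land there, and entry and exit close up into a single detour strand.

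A flow domain of this shape is a higher-genus handlebody. Your Step~3 only treats modifications inside a Darboux ball; even the solid torus $D\times S^1$ with $\ker(dt+\lambda)$ comes from the neighbourhood theorem for a transverse knot, not from Darboux. Realizing such a domain with prescribed kernel inside an arbitrary $(M,\xi)$ is the real content of the paper, in three steps:
\begin{enumerate}
\item It builds the whole computation as a dynamical handlebody $W$: $0$-handles for states and $1$-handles for transitions, with $q_i$ and $q_f$ glued into a single $0$-handle whose middle slice is the section $D_0$.
\item It puts on $W$ a contact form that is standard near the $0$-handles and whose Reeb field spans the foliation.
\item It embeds $W$ into $(M,\xi)$ along neighbourhoods of transverse arcs, using Eliashberg's uniqueness of tight contact structures on the ball rel boundary (Lemma~\ref{lem:contact-tubes}).
\end{enumerate}
Non-halting orbits then just circulate in $W$ forever. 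No traps or sinks are needed, so your compactness worry is unfounded.

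\textbf{A cheaper repair within your framework.} Once you see the constraint above, the channels are superfluous. Let the disk map also send each halting configuration $(q_f,t)$ to $(q_i,t)$; the images stay disjoint because $q_i$ has no incoming edges. Realize this map as the return map on the meridian of a single contact mapping torus: Lemma~\ref{lem:contact-disk-maps} with the ends glued, placed inside a thin standard neighbourhood of a transverse knot. Then take $D$ to be a subdisk of that meridian containing the $q_f$-block and no other state block. The first return to $D$ is the induced map. It runs the machine exactly until it halts and is undefined otherwise.

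\textbf{Further corrections.}
\begin{itemize}
\item Step~1 needs $M_f$ to be reversible (Bennett). Otherwise one step is not injective and the image blocks overlap; disjointness is precisely Lemma~\ref{lem:rev}.
\item The blocks are stretched by $(x,y)\mapsto(3x,y/3)$, so you need the extension Lemma~\ref{lem:disk-maps}(a) rather than square swaps.
\item No area-preserving map can decode the halting configuration into the encoding of $f(n)$ when $f$ is not injective, because return maps are injective. As in the paper, $\Phi(n)$ is the final tape $t'$, read through $f_M(n)=\sum_i t'_{2i}2^i$.
\end{itemize}
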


The proof of this result combines ideas from computability and contact
topology. On the computational side, it builds on the
work~\cite{gonzalez2025topological}, where models are constructed in
which computable partial functions are encoded by return maps of
volume-preserving vector fields on dynamical handlebodies. More
precisely, given a computable function $f: \N \dashrightarrow \N$,
consider a Turing machine $M_f$ computing it. One then places a $0$-cell
for each internal state of $M_f$, and connects these cells by
$1$-handles representing the transitions of $M_f$ between states,
thereby constructing a handlebody $W$. Alternatively, one may think of
$W$ as the result of ``thickening'' the graph representing $M_f$ as a
finite state machine. Then a volume preserving vector field $X$ is
defined on $W$ so that the dynamics on each $1$-handle correspond to
the read-write-shift operation performed by the Turing machine $M_f$, in
such a way that the global continuous dynamics simulate the symbolic
dynamics of $M_f$ on transverse sections of the $0$-handles.
Furthermore, by gluing the loose ends of $W$ corresponding to the
starting and halting states of $M_f$ we can arrange that $W$ has all its
$1$-handles attached on both sides to $0$-handles, and the gluing
locus $D_0$ becomes a Poincar\'e section for $X$ whose first return map
restricts to $f$ on $\N\subset D_0$.

On the geometric side, the main task is thus to realize this dynamical
handlebody as a Reeb flow. We achieve this through a series of results
that may be of independent interest. First, we construct a contact
form $\alpha$ on $W$ whose Reeb vector field is a rescaling of
$X$. This form is handcrafted in such a way that it agrees with the
standard form $-y\,dx + dt$ on a neighborhood of the
$0$-handles. Secondly, we prove that we can find an embedding $\phi: W
\hookrightarrow M$ into the given contact $3$-manifold, and a contact
form $\beta \in \Omega^1(M)$ with $\ker\beta=\xi$, such that
$\phi^*\beta = \alpha$. This implies that
$\phi(D_0) \subset M$ is a Poincar\'e section for the Reeb flow of
$\beta$ whose first return map $\Phi$ satisfies $\Phi|_\N = f$.

This note should be seen as a generalization
of~\cite{cardona2024hydrodynamic}, where Turing machines are encoded
as Euler flows through generalized shifts, but with the focus shifted
to computable functions. In \cite{cardona2024hydrodynamic}, the
authors constructed a Reeb flow on any $3$-manifold whose return map
on a Poincar\'e section represents one step in the computation of a
given Turing machine. They achieved this by embedding the contact
mapping torus of any generalized shift inside the given contact
manifold. In contrast, the construction developed in this note
realizes the whole computable function as the return map of the flow,
in such a way that the computation occurs in a single pass of the
flow. This is not merely a matter of delooping the mapping torus a
finite number of times, since the number of iterations needed to
complete the computation may depend strongly on the input. Rather,
topology plays a central role in the construction, as captured by the
dynamical handlebody. In this spirit, the proof of the contact
embedding in the present work may be seen as an extension of the
methods of \cite{cardona2024hydrodynamic} from mapping tori of
symplectomorphisms to general dynamical handlebodies.

Theorem \ref{thm:main} has a natural counterpart in symplectic
geometry. Indeed, by considering the symplectization $\widetilde{M} =
M \times \R_{>0}$ associated with $M$ and the constructed contact form
$\beta$, we can realize this Reeb flow as the restriction of a
Hamiltonian flow on $\widetilde{M}$ to an energy level. Hence, Theorem
\ref{thm:main} readily implies that one can construct Hamiltonian
flows on arbitrary $4$-dimensional symplectic manifolds realizing any
given computable function on a suitable Poincar\'e section. 
Furthermore, the correspondence between Reeb vector fields and
Beltrami fields \cite{etnyre2000contact} implies that any Reeb vector
field can be realized as a steady solution of the Euler equations for
an adapted metric. Therefore, a direct consequence of Theorem
\ref{thm:main} is that, on any given $3$-manifold, every computable
function can be realized as a Poincar\'e return map of a steady Euler
flow. This reinforces the idea that steady Euler flows may exhibit
extremely complex behaviour.

\section{Turing machines and computable functions}\label{sec:Turing}

We begin by recalling some background about Turing machines and
computable functions, see e.g.~\cite{moore2011nature}. 

\subsection{Turing machines}
Let $\cA$ be a finite set, called the {\em alphabet}. For this note,
without loss of generality, we will take the {\em binary alphabet}
$\cA = \{0,1\}$. 

A {\em Turing machine} $M=(Q,q_i,q_f,\delta)$ on $\cA$ consists of
\begin{itemize}
\item a finite set $Q$ of {\em (internal) states};
\item an {\em initial state} $q_i\in Q$ and a {\em final (or halting) state}
  $q_f\in Q$;\footnote{
The more general case of a finite set of halting
states can always be reduced to the case of one halting state,
see~\cite[\S4.3]{gonzalez2025topological}. } 
\item a {\em transition function}
$$
  \delta: Q\times\cA \to Q\times\cA\times\{+1,-1\}.
$$
\end{itemize}
Let $\Lambda\subset\cA^\Z$ be the set of finite two-sided sequences
$t=(t_i)_{i\in\Z}$, called the {\em tape states}.
Here ``finite'' means that $t_i=0$ for all but finitely many $i$.
The Turing machine induces a map
$$
  \Delta_M: Q\times\Lambda \to Q\times\Lambda
$$
on the set $Q\times\Lambda$ of {\em computation states} as follows. 
Given $(q,t)\in Q\times\Lambda$, write $\delta(q,t_0)=(q',s,\eps)$ and define
$\Delta_M(q,t):=(q',t')$ with
$$
  t_n' := \begin{cases}
    t_{n+\eps} & n \neq -\eps, \cr  
    s & n = -\eps.  
  \end{cases}
$$
The interpretation is that the read-write head (which is always at
position $0$) reads out the entry $t_0$ from $t$ and replaces it by
$s$. If $\eps=+1$ (resp.~$-1$) it moves one position to the right
(resp.~left), or equivalently, the sequence is shifted one position to
the left (resp.~right). Finally, the internal state $q$ is replaced by
$q'$. 

A Turing machine is called {\em reversible} if the map $\Delta_M$ is
injective.
By Bennett's theorem (see~\cite[\S4.1]{gonzalez2025topological}), for
every Turing machine there exists an equivalent reversible Turing machine. 

\subsection{Computable functions}\label{sec:computable-functions}

Every Turing machine $M$ computes a partially defined function
$$
  f_M:\N \dashrightarrow \N
$$
as follows. Given $n\in\N$, consider its binary expansion
$n=\sum_{i=0}^\ell a_i\,2^i$ with $a_i\in\{0,1\}$ and set the initial
tape state $t = (t_i)$ to $t_{2i} = a_i$ and $t_{2i+1} = 1$ for $0
\leq i \leq \ell$, and $t_i = 0$ otherwise. Here the $1$'s at the odd
positions are placed just as markers to distinguish the actual digits
of $n$ from the tail of blanks. For example, the number $n = 5$ would be encoded as $\cdots00\widehat{1}101110\cdots$, where the hat denotes the $0$-th position of the sequence and the omitted entries are all $0$.

Then, we start the Turing machine $M$ with $(q_i,t)$. If $M$ halts with tape state $t'$ we set
$$
    f_M(n) := \sum_{i = 0}^\infty t'_{2i}\,2^i;
$$
otherwise, if $M$ does not halt, we say that $f_M(n)$ is
undefined. Here we allow $t'$ to have nontrivial entries $t_i'$ with
$i<0$, which is necessary for reversible Turing machines.
The function $f_M$ is defined precisely on the set $D_f \subset \N$
of those $n$ for which the machine halts. 

A partially defined function $f: \N \dashrightarrow \N$ is called {\em computable} if $f=f_M$
for some Turing machine $M$. Notice that the assignment of a partially
defined function to a Turing machine is far from being injective,
since many different Turing machines may implement the same
function. It is not surjective either, as the computable functions
form a countable set, whereas $\NN^\NN$ is uncountable. An example of
a non-computable function is, for instance, the so-called busy beaver
function \cite{rado1962non}. 

\subsection{Finite state machines}

A Turing machine $M$ can be equivalently represented as a {\em finite
  state machine} $\cG_M$. This is a directed graph with vertex set $Q$
whose directed edges are labelled with triples
$$
  (\alpha,\beta,\eps) \in \cA\times\cA\times\{+1,-1\}.
$$
Here the label $(\alpha,\beta,\eps)$ of an edge from $q$ to $q'$ is
determined by
$$
  \delta(q,\alpha) = (q',\beta,\eps).
$$
Two vertices of $\cG_M$ are marked as special: the vertex
$q_f$ corresponding to the final state, which has no outgoing edges;
and the vertex $q_i$ of the initial state, which has no incoming edges.
Each vertex $q\neq q_f$ has exactly $|\cA|$ outgoing edges, with the
elements $\alpha\in\cA$ in the first entry of their labels. The number
of incoming edges at a vertex $q\neq q_i$ is unconstrained and there
can be self-loops. See Figure~\ref{fig-example-graph} for an example
of a finite state machine associated to a Turing machine.

\begin{figure}[h!]
    \centering
    \begin {tikzpicture}[-latex ,auto ,node distance =3 cm and 3cm ,on grid ,
semithick ,
state/.style ={ circle ,top color =white , bottom color = processblue!20 ,
draw,processblue , text=blue , minimum width =1 cm}]
\node[state] (C)
{$q_f$};
\node[state] (A) [above left=of C] {$q_i$};
\node[state] (B) [above right =of C] {$q$};
\path (A) edge  node[left] {$(0,0,-1)\;$} (C);
\path (A) edge [bend left =25] node[above] {$(1,0,+1)$} (B);
\path (B) edge [loop right] node[right] {$(1,0,+1)$} (B);
\path (B) edge node[right] {$\;(0,1,-1)$} (C);
\end{tikzpicture}
    \caption{Example of a finite state machine.}
    \label{fig-example-graph}
\end{figure}

If we associate to each vertex a copy of the set $\Lambda$ of tape
states, then moving along the edges according to the map $\Delta_M:
Q\times\Lambda \to Q\times\Lambda$ defines a discrete dynamics on $\Lambda$.

The following lemma characterizes reversibility of a Turing machine in
terms of its finite state machine. It was first formulated by Morita
in \cite{morita2017reversible}, and we include its proof here for the
sake of completeness. 

\begin{lem}\label{lem:rev}
A Turing machine is reversible if and only if its finite state machine
$\cG_M$ satisfies the following condition: for any pair of incoming
edges entering the same vertex with labels $(\alpha,\beta,\eps)$ and
$(\wt\alpha,\wt\beta,\wt\eps)$, we have $\beta\neq\wt\beta$ and $\eps=\wt\eps$.
\end{lem}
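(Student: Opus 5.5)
The plan is to prove both directions by describing the preimages of $\Delta_M$ explicitly in terms of incoming edges of $\cG_M$. Since $q_f$ has no outgoing edges in $\cG_M$, I will follow the graph convention and regard $\Delta_M$ as defined on computation states $(q,t)$ with $q\neq q_f$. Each such state determines the edge $e$ of $\cG_M$ leaving $q$ with first label entry $\alpha=t_0$. If $e$ has label $(\alpha,\beta,\eps)$ and ends at $q'$, then $\Delta_M(q,t)=(q',t')$. The definition of $\Delta_M$ gives $t'_{-\eps}=\beta$, and $t_n=t'_{n-\eps}$ for $n\neq 0$.

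The key observation is the following inversion statement. Fix a target $(q',t')$ and an edge $e$ into $q'$ with label $(\alpha,\beta,\eps)$, starting at $q$. Then $(q',t')$ has a preimage through $e$ if and only if $t'_{-\eps}=\beta$. In that case the preimage is unique, namely $(q,t)$ with $t_0=\alpha$ and $t_n=t'_{n-\eps}$ for $n\neq0$; this $t$ is again finite, so it lies in $\Lambda$. Moreover, preimages through distinct edges are distinct. Indeed, distinct edges differ either in their source $q$ or, if they share a source, in $\alpha=t_0$. Consequently, $\Delta_M$ is injective if and only if, for every $(q',t')$, at most one edge $e$ into $q'$ satisfies $t'_{-\eps_e}=\beta_e$.

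\emph{Sufficiency.} Assume the condition holds, and let $e\neq\wt e$ both enter $q'$. Then $\eps=\wt\eps$ and $\beta\neq\wt\beta$. So the two requirements $t'_{-\eps}=\beta$ and $t'_{-\eps}=\wt\beta$ conflict at the same tape position, and at most one edge contributes a preimage.

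\emph{Necessity.} Suppose the condition fails for two distinct edges $e,\wt e$ into $q'$. There are two cases.
\begin{itemize}
\item If $\eps\neq\wt\eps$, the positions $-\eps$ and $-\wt\eps$ are different. Choose $t'\in\Lambda$ with $t'_{-\eps}=\beta$ and $t'_{-\wt\eps}=\wt\beta$, and all other entries $0$.
\item If $\eps=\wt\eps$ and $\beta=\wt\beta$, choose $t'$ with $t'_{-\eps}=\beta$.
\end{itemize}
In either case $(q',t')$ has two distinct preimages, so $\Delta_M$ is not injective.

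The only delicate point is the distinctness of the two preimages, and it is handled by the observation that an edge is determined by its source together with $\alpha=t_0$. Beyond that, the main care needed is bookkeeping the sign conventions $t'_{-\eps}=s$ and the shift direction correctly.
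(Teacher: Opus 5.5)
Your proof is correct and follows essentially the same route as the paper. For sufficiency, two distinct incoming edges would both have to write their $\beta$ at the same position $-\eps$. For necessity, you use the same witness tapes as the paper ($t'_{-\eps}=\beta$, $t'_{-\wt\eps}=\wt\beta$ when $\eps\neq\wt\eps$, resp.\ $t'_{-\eps}=\beta$ when $\eps=\wt\eps$, $\beta=\wt\beta$), each having two distinct preimages. The only difference is organizational: you build the common image $t'$ and pull it back via the explicit inverse, whereas the paper writes down the two source tapes and pushes them forward. Your explicit treatment of $q_f$, of the finiteness of preimage tapes, and of the distinctness of preimages through distinct edges makes precise steps the paper leaves implicit.
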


\begin{proof}
Suppose first that $M$ is reversible and consider at a vertex $q'$ two 
incoming edges from vertices $q$ and $\wt q$ with labels
$(\alpha,\beta,\eps)$ and $(\wt\alpha,\wt\beta,\wt\eps)$.

First, let us prove that $\eps=\wt\eps$. Arguing by contradiction, suppose that
$\eps=+1$ and $\wt\eps=-1$ (the other case is analogous).
Consider the tape states $t,\wt t$ at $q,\wt q$ with $t_0=\alpha$,
$t_2=\wt\beta$, $\wt t_0=\wt\alpha$, $\wt t_{-2}=\beta$, and all other
entries $0$. Then $\Delta_M(q,t)=\Delta_M(\wt q,\wt t)=(q',t')$ with
$t_{-1}'=\beta$, $t_1'=\wt\beta$, and all other entries $0$. 
By reversibility of $M$ we must have $(q,t)=(\wt q,\wt t)$, which
implies $\alpha=\wt\alpha$ and therefore $\eps=\wt\eps$, contradicting
the assumption. 

Next, suppose that $\beta=\wt\beta$ (as well as $\eps=\wt\eps$).
Consider the tape states $t,\wt t$ at $q,\wt q$ with $t_0=\alpha$,
$\wt t_0=\wt\alpha$, and all other
entries $0$. Then $\Delta_M(q,t)=\Delta_M(\wt q,\wt t)=(q',t')$ with
$t_{-\eps}'=\beta$ and all other entries $0$. 
By reversibility of $M$ we must have $(q,t)=(\wt q,\wt t)$, hence
$\alpha=\wt\alpha$ and the two labelled edges are identical. 
Thus, the two labelled edges can only be different if
$\beta\neq\wt\beta$ and the property in the lemma is shown.

Conversely, suppose that $\cG_M$ satisfies the property in the lemma
and consider $(q,t)$ and $(\wt q,\wt t)$ with
$\Delta_M(q,t)=\Delta_M(\wt q,\wt t)=(q',t')$. 
This corresponds at the vertex $q'$ to two 
incoming edges from vertices $q$ and $\wt q$ with labels
$(\alpha,\beta,\eps)$ and $(\wt\alpha,\wt\beta,\wt\eps)$.
If the labelled edges were different, then the property in the
lemma would imply $\eps=\wt\eps$, hence $t_{-\eps}'=\beta=\wt\beta$,
contradicting that $\beta\neq\wt\beta$ by hypothesis.  
Thus, the labelled edges are identical, which implies $(q,t)=(\wt
q,\wt t)$ and proves reversibility. 
\end{proof}

\begin{rem}
In the case of a binary alphabet, as the one we are considering in
this note, Lemma~\ref{lem:rev} implies that each vertex has at most
two different incoming labelled edges. 
\end{rem}

\subsection{The square Cantor set}\label{ss:Cantor}

Consider the {\em ternary (or middle-thirds) Cantor set}
$$
  C := \Bigl\{\frac{2}{3}\sum_{i=0}^\infty x_i3^{-i} \;\Bigl|\; x_i\in\{0,1\}\Bigr\}
  \subset [0,1].
$$
It corresponds to the points $x\in[0,1]$ whose ternary expansion
$x=\sum_{i=1}^\infty x_i3^{-i}$ has only entries $x_i\in\{0,2\}$. 
We have a canonical embedding
$$
  \kappa_\Lambda:\Lambda\into C^2,\qquad (t_i)_{i\in\Z}\mapsto
  (x,y)=\Bigl(\frac{2}{3}\sum_{i=0}^\infty t_i3^{-i},2\sum_{i=1}^\infty t_{-i}3^{-i}\Bigr).
$$
Thus $\kappa_\Lambda$ encodes the nonnegative entries of $t$ in the
$x$-component and the negative entries in the $y$-component in the
{\em square Cantor set} $C^2\subset[0,1]^2$. In particular, encoding
$\NN$ inside $\Lambda$ as in Section \ref{sec:computable-functions},
we also get an embedding $\kappa_\N:\N\into C$. In the following we
will identify $\N$ and $\Lambda$ with their images in $C$ and $C^2$,
respectively, and drop the maps $\kappa_\N$ and $\kappa_\Lambda$.

\section{Disk maps and contact forms}

In the sequel, the closed $n$-dimensional ball in $\R^n$ of radius $r
> 0$ will be denoted by $\DD^n_r$, and its boundary by $\p
\DD^n_r$. To shorten notation, for the closed unit ball we set $\DD^n
:= \DD^n_1$. By a {\em disk} $D$ we will mean a $2$-manifold
diffeomorphic to $\DD^2$. 
We say that a function (resp.~diffeomorphism) on $D$ is {\em compactly 
supported} if it is equal to zero (resp.~the identity) on a
neighbourhood of $\p D$.
The interior of a subset $A \subset \R^n$ will be denoted by $\Int A$.

\subsection{Disk maps}

Consider a disk $D$ equipped with a symplectic form $\om$. Denote by 
$$
  \Ham_c(D)\subset \Symp_c(D) \subset \Diff_c(D)
$$
the group of compactly supported diffeomorphisms and its subgroups of
symplectic and Hamiltonian ones. Recall that the Hamiltonian vector
field $X_H$ of a function $H:D\to\R$ is defined by $i_{X_H}\om=-dH$,
and a diffeomorphism is called Hamiltonian if it is the time-$1$-map
of a time-dependent Hamiltonian vector field. 

The following lemma is well-known (see e.g.~\cite{CMPP}); we include
its simple proof for the sake of completeness. 

\begin{lem}\label{lem:disk-maps}
Let $D$ be a disk equipped with a symplectic form $\om$.
\begin{enumerate}
    \item[(a)] Given disjoint embedded closed disks $D_1,\dots,D_k\subset\Int D$ and area
  preserving embeddings $\psi_i:D_i\into\Int D$ with disjoint images,
  there exists $\psi\in\Symp_c(D)$ with $\psi|_{D_i}=\psi_i$ for $i=1,\dots,k$.
    \item[(b)] Given $\psi\in\Symp_c(D)$, there exists a smooth path
  $\psi_t\in\Symp_c(D)$ from $\psi_0=\Id$ to $\psi_1=\psi$.
    \item[(c)] Given a smooth path $(\psi_t)_{t\in[0,1]}$ in $\Symp_c(D)$ with
  $\psi_0=\id$, there exists a smooth family of compactly supported
  functions $(H_t)_{t\in[0,1]}$ such that $\dot\psi_t = X_{H_t}\circ\psi_t$.
\end{enumerate}
\end{lem}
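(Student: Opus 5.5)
We treat the three parts separately. They rest on three classical inputs: Moser's method, Smale's theorem that $\Diff_c(\DD^2)$ is contractible, and the fact that a closed $1$-form on a disk is exact.

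\emph{Part (c).} Let $X_t$ be the time-dependent vector field defined by $\dot\psi_t = X_t\circ\psi_t$. Since $\psi_t\in\Symp_c(D)$, the field $X_t$ vanishes near $\p D$, and differentiating $\psi_t^*\om=\om$ gives $\mathcal{L}_{X_t}\om = d\,i_{X_t}\om = 0$. So $i_{X_t}\om$ is a closed $1$-form on the disk $D$, hence exact. We fix a point $p_0$ near $\p D$ and define
\[
H_t(p) := -\int_{p_0}^p i_{X_t}\om .
\]
This integral is independent of the path because $D$ is simply connected. The functions $H_t$ depend smoothly on $t$ and satisfy $i_{X_t}\om = -dH_t$. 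Since $X_t=0$ on a connected collar of $\p D$, the function $H_t$ is constant on that collar, and this constant is $0$ because $p_0$ lies in the collar. Hence $H_t$ is compactly supported.

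\emph{Part (b).} By Smale's theorem, $\Diff_c(D)$ is contractible, so there is a smooth path $\phi_t\in\Diff_c(D)$ from $\Id$ to $\psi$. Consider the symplectic forms $\om_t := \phi_t^*\om$. Each $\om_t$ agrees with $\om$ near $\p D$, and $\int_D\om_t = \int_D\om$. Therefore $\om_t-\om = d\sigma_t$ for some $\sigma_t$ with compact support in $\Int D$; such $\sigma_t$ exists because $H^2_c(\Int D)\cong\R$ is detected by the integral. We can choose $\sigma_t$ smoothly in $t$ by using a fixed compactly supported homotopy operator.

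Next we apply Moser's argument along $t$ to the family $\om_t$. This produces compactly supported diffeomorphisms $\rho_t$ with $\rho_0=\Id$ and $\rho_t^*\om_t=\om$, depending smoothly on $t$. Concretely, we solve $\frac{d}{dt}(\rho_t^*\om_t)=0$ by a vector field $Y_t$ with $i_{Y_t}\om_t = -\dot\sigma_t$.

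Then $\phi_t\circ\rho_t\in\Symp_c(D)$. At $t=1$ we have $\psi^*\om=\om$, so $\om_1=\om$. We may arrange $\rho_1=\Id$ by making the Moser correction trivial at the endpoints: reparametrize so the path is constant near $t=1$, or concatenate with a path in $\Symp_c(D)$ from $\rho_1$ back to $\Id$. It is cleanest to instead run Moser with respect to the linear path $(1-s)\om+s\om_t$ for each fixed $t$; this gives $\rho_t$ with $\rho_1=\Id$ automatically, since $\om_1=\om$ makes that path constant and the Moser field zero at $t=1$.

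\emph{Part (a).} First we extend the $\psi_i$ to a diffeomorphism. Choose a slightly larger disk $D_i'\supset D_i$ for each $i$. Since orientation-preserving embeddings of a disk into the interior of a connected surface are isotopic, and the targets are disjoint, there is $\phi\in\Diff_c(D)$ with $\phi|_{D_i'}$ extending $\psi_i$. To get this, we move the disks $D_i$ one at a time along disjoint isotopies (isotopy extension), and match germs on a collar by the disk theorem.

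Then we correct $\phi$ to be symplectic without changing it on $\bigcup_i D_i$. Let $\om_1 := \phi^*\om$. It equals $\om$ on $\bigcup_i D_i$ and near $\p D$, and it has the same total area as $\om$. We write $\om_1 - \om = d\sigma$ with $\sigma$ supported in the complement $U := \Int D\setminus\bigcup_i D_i$, and run Moser on $(1-s)\om+s\om_1$. The Moser vector field then vanishes on $\bigcup_i D_i$ and near $\p D$, so the resulting $\rho$ with $\rho^*\om_1=\om$ is the identity there. The desired map is $\psi := \phi\circ\rho$.

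\textbf{Main obstacle.} The delicate step is finding a primitive $\sigma$ supported away from the small disks. The complement $U$ is a disk with $k$ holes, and the obstruction to a compactly supported primitive lies in $H^2_c$ of $U$ relative to its boundary components. Concretely, $\om_1-\om$ must be exact relative to every boundary circle $\p D_i$, which requires each $\int_{D_i}(\om_1-\om)=0$. This holds because $\psi_i$ is area preserving, so $\om_1=\om$ on $D_i$. The remaining total-area condition also holds, and together these give the required primitive.
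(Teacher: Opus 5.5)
Your proposal is correct and follows the paper's proof in all three parts. In (a) you extend the $\psi_i$ to some $\phi\in\Diff_c(D)$ and Moser-correct $\phi^*\om$ relative to $\bigcup_i D_i$ and $\p D$. In (c) you integrate $i_{X_t}\om$ from a point where it vanishes. In (b), the version you finally adopt (Moser along $(1-s)\om+s\om_t$ for each fixed $t$) is exactly the paper's ``contraction of the loop'' $\om_t=\phi_t^*\om$, giving $\rho_0=\rho_1=\Id$. However, drop the two other remedies you offer in (b): reparametrizing the $t$-Moser path does not change $\rho_1$, and concatenating with a path in $\Symp_c(D)$ from $\rho_1$ to $\Id$ assumes the very statement being proved.
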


\begin{proof}
(a) Pick $\theta\in\Diff_c(D)$ with $\theta|_{D_i}=\psi_i$ for all
  $i$. The symplectic form $\wt\om:=\theta^*\om$ agrees with $\om$
  on each $D_i$ and near $\p D$. Hence, by Moser's theorem there exists
  $\phi\in\Diff_c(D)$ with $\phi|_{D_i}=\Id$ such that
  $\phi^*\wt\om=\om$. Now $\psi:=\theta\circ\phi$ is the desired map.
  
(b) By Smale's theorem from~\cite{Smale59}, there exists a smooth path
  $\theta_t\in\Diff_c(D)$ from $\theta_0=\Id$ to $\theta_1=\psi$. Then
  $\om_t:=\theta_t^*\om$, $t\in[0,1]$, is a loop of symplectic forms
  starting and ending at $\om$. Applying Moser's theorem to a
  contraction of this loop, we find a smooth path
  $\phi_t\in\Diff_c(D)$ with $\phi_0=\phi_1=\Id$ and
  $\phi_t^*\om_t=\om$. Now $\psi_t:=\theta_t\circ\phi_t$ is the
  desired path in $\Symp_c(D)$. 

(c) Let $X_t:=\dot\psi_t\circ\psi_t^{-1}$ be the time-dependent vector
  field generating the path $\psi_t$. Then
  $0=\frac{d}{dt}\psi_t^*\om=\psi_t^*L_{X_t}\om$, and therefore $di_{X_t}\om=0$.  
  Since $i_{X_t}\om$ is compactly supported and $H^1(D,\p D)=0$, there
  exists a smooth family of compactly supported
  functions $(H_t)_{t\in[0,1]}$ such that $i_{X_t}\om=-dH_t$.
  (Explicitly, pick $z_0\in\p D$ and define
  $H_t(z):=-\int_{\gamma_z}i_{X_t}\om$ for the straight line segment
  $\gamma_z$ from $z_0$ to $z$.)
Then $X_t=X_{H_t}$ and $\dot\psi_t = X_{H_t}\circ\psi_t$.
\end{proof}

\subsection{Basic operations as disk maps}\label{ss:basic-ops}

It was observed by Moore that the operations associated to the edges
of a finite state machine $\cG_M$ can be realized by area preserving
maps of the unit disk $\DD^2\subset\R^2$ (\cite{Mo1,Mo2}, see
also~\cite[\S4.2]{gonzalez2025topological}).
To see this, consider an edge of $\cG_M$ from $q$ to $q'$ labelled by
$(\alpha,\beta,\eps)$. The associated map on $\Lambda$ is a
composition
$$
  s_{\alpha,\beta,\eps} = s_\eps\circ s_{\alpha,\beta}:\Lambda\to\Lambda.
$$
Here for $t=(t_i)_{i\in\Z}$ with $t_0=\alpha$ we have
$s_{\alpha,\beta}(t)=\wt t$ with $\wt t_0=\beta$ and $\wt t_i=t_i$ for
$i\neq 0$, and $s_+,s_-$ are the shift maps to the left resp.~right,
$$
  s_+(\dots t_{-2}t_{-1},t_0t_1t_2\dots) = (\dots t_{-2}t_{-1}t_0,t_1t_2\dots),\qquad 
  s_-(\dots t_{-2}t_{-1},t_0t_1t_2\dots) = (\dots t_{-2},t_{-1}t_0t_1t_2\dots). 
$$
These maps naturally extend to the square Cantor set $C^2$ in such a
way that they preserve the standard area form 
$\om_\std=dx\wedge dy$ on $\R^2$. For instance, the map
$s_{\alpha,\beta}$ is the identity or a translation by $\pm 2/3$ in
the $x$-direction. 
On the other hand, $s_+=s_-^{-1}$ is piecewise defined
consisting of two translations composed with the linear map
$(x,y)\mapsto(3x,y/3)$. 
More precisely, consider the left and right thirds of the square
$[0,1]^2$, 
$$
  R_0 = [0,1/3]\times [0,1],\qquad R_1=[2/3,1]\times [0,1].
$$
The affine maps $A_0,A_1:\R^2\to\R^2$ defined by
$$
  A_0(x,y) = (3x,y/3),\qquad A_1(x,y) = (3x-2,y/3+2/3)
$$
map $R_0$ and $R_1$ to the lower and upper thirds 
$$
  A_0(R_0) = [0,1]\times [0,1/3],\qquad A_1(R_1)=[0,1]\times [2/3,1],
$$
and the shift map $s_+$ is given by the restriction of $A_\alpha$ on
$C^2\cap R_\alpha$ for $\alpha=0,1$, see Figure
\ref{fig:shift-square}. Actually, $A_0$ and $A_1$ correspond to the
restrictions of the baker's map to $R_0$ and $R_1$, respectively. 

\begin{figure}[h]
    \centering
    \includegraphics[width=0.4\linewidth]{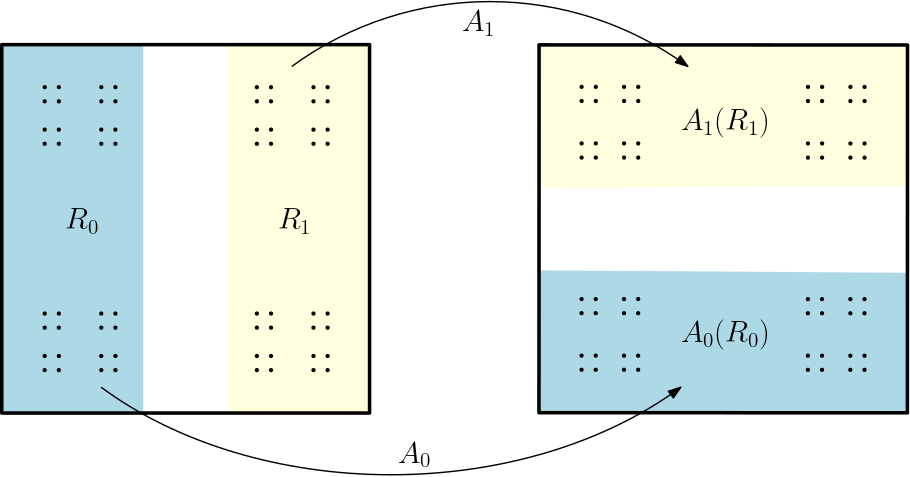}
    \caption{Regions $R_0, R_1 \subset [0,1]$ and the affine maps $A_0$ and $A_1$}
    \label{fig:shift-square}
\end{figure}

We place the unit square $[0,1]^2$ into the unit disk via the translation
$$
  T(x,y) = (x-1/2,y-1/2),
$$
so that $T([0,1]^2)=[-1/2,1/2]^2\subset\DD^2$ is centered at the origin,
and we denote $\wt R_\alpha=T(R_\alpha)$ and $\wt A_\alpha=T\circ
A_\alpha\circ T^{-1}$ for $\alpha=0,1$.  
In the sequel, we will identify the square Cantor set $C^2$ with its
image $T(C^2)\subset\DD^2$. 

Pick disjoint embedded disks $D_0,D_1\subset\Int\DD^2$ with $\wt
R_\alpha\subset\Int D_\alpha$ such that
$\wt A_0(D_0)\cap \wt A_1(D_1)=\varnothing$ and $D_1=D_0+(2/3,0)$.
By Lemma~\ref{lem:disk-maps}, there exists an area preserving diffeomorphism
$$
  \phi_+:(\DD^2,\om_\std) \stackrel{\cong}\longrightarrow (\DD^2,\om_\std)
$$
that equals the identity near $\p\DD^2$ with $\phi_+|_{D_\alpha}=\wt
A_\alpha$ for $\alpha=0,1$ (see Figure \ref{fig:shift-disk}). We set $\phi_-:=\phi_+^{-1}$ and note that
$\phi_\pm|_{C^2}=s_\pm$.  

\begin{figure}[h]
    \centering
    \includegraphics[width=0.6\linewidth]{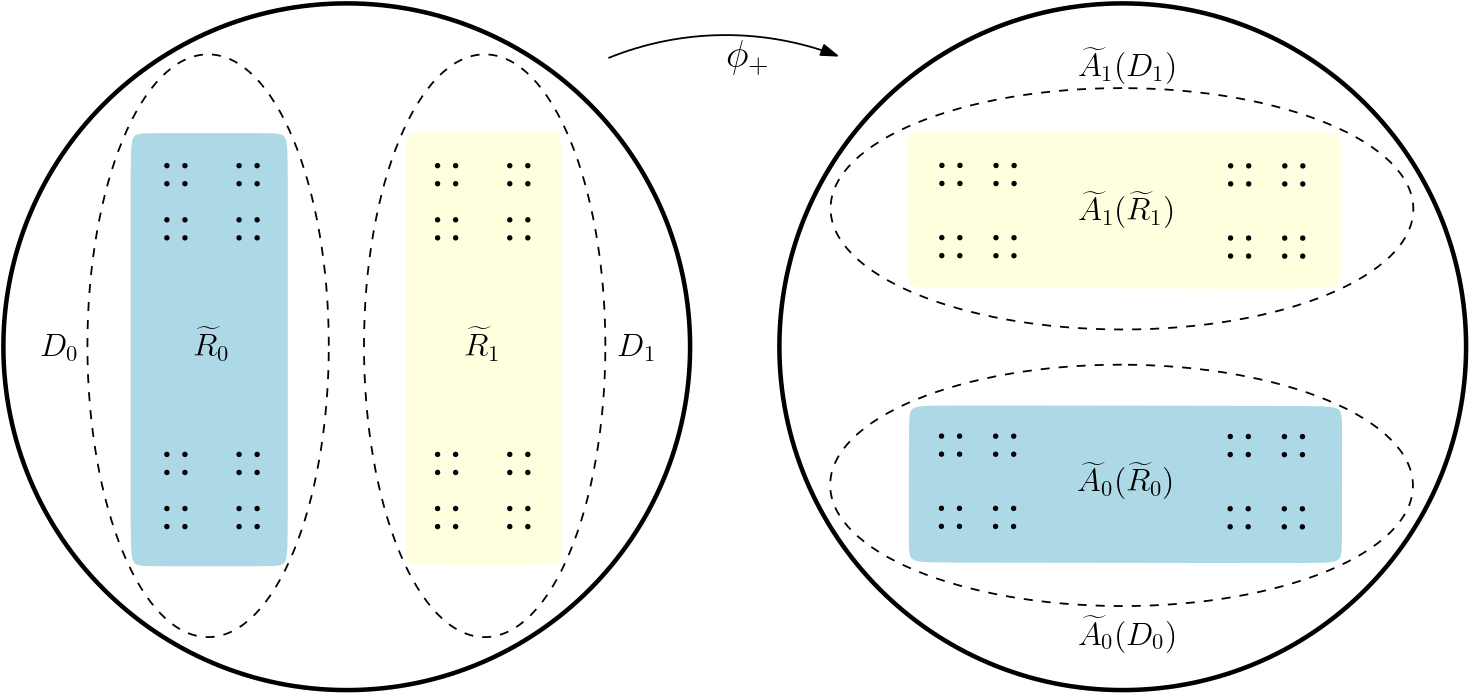}
    \caption{Effect of the diffeomorphism $\phi_+: \DD^2 \to \DD^2$}
    \label{fig:shift-disk}
\end{figure}

For $\alpha,\beta\in\{0,1\}$ we define the translations
$\phi_{\alpha,\beta}:D_\alpha\to D_\beta$ by 
$$
\phi_{\alpha,\alpha}(x,y) = (x,y), \quad \phi_{0,1}(x,y) = (x+2/3,y), \quad \phi_{1,0}(x,y) = (x-2/3,y),
$$
and the area preserving embeddings
$$
  \phi_{\alpha,\beta,\eps} :=
  \phi_\eps\circ\phi_{\alpha,\beta}:(D_\alpha,\om_\std) \into (\DD^2,\om_\std). 
$$
By construction, they satisfy
\begin{equation}\label{eq:phi-s}
  \phi_{\alpha,\beta,\eps}|_{C^2\cap D_\alpha} = s_{\alpha,\beta,\eps}.
\end{equation}

\begin{rem}\label{rem:ellipse}
A short computation shows that for $\alpha=0,1$ we can take $D_\alpha$
to be the translate centered at the center of $\wt R_\alpha$ of the
ellipse $E=\{(x,y)\mid (3x)^2+y^2\leq b^2\}$ with $1/2<b^2<8/9$.
\end{rem}

\subsection{Contact forms and structures}

In this subsection, we briefly review some basic concepts from contact
topology. For a more thorough treatment see e.g.~\cite{geiges2008introduction}.

Let $M$ be a $3$-manifold. A \emph{contact form} on $M$ is a $1$-form
$\alpha \in \Omega^1(M)$ such that $\alpha \wedge d\alpha$ is nowhere
zero. It defines a (coorientable) \emph{contact structure} $\xi = \ker
\alpha$, which is a maximally non-integrable distribution of
planes. Any contact form $\alpha'$ with $\xi = \ker \alpha'$ is said
to be a contact form defining $\xi$. 
Associated to a contact form $\alpha$ is its \emph{Reeb vector field}
$R$ defined by the properties 
$$
    \iota_{R}d\alpha = 0, \qquad \alpha(R) =1.
$$
A key property of contact structures is that they are rigid under
deformation: Gray's stability theorem asserts that, given a smooth
$1$-parameter family of contact forms $\alpha_t \in \Omega^1(M)$ for
$0 \leq t \leq 1$ with $\alpha_t=\alpha_0$ outside a compact set, there
exists an isotopy $\varphi_t: M \to M$ such that
$\varphi_t^*(\alpha_0) = \mu_t \alpha_t$, for a smooth family of
functions $\mu_t: M \to \RR_{>0}$. In particular, $(\varphi_t)_*
(\xi_0) = \xi_t$ for all $t$, where $\xi_t = \ker \alpha_t$, so the
induced contact structures are isomorphic.

A classical example of a contact form is the following.
On
$\R^2$ with coordinates $(x,y)$ we consider the standard Liouville $1$-form
$$
  \lambda_{\textrm{std}} = -y\,dx. %
$$
It induces the standard symplectic form
$\om_\std=d\lambda_{\textrm{std}} = dx\wedge dy$ on $\R^2$, and the
standard contact form and structure
$$
  \alpha_\textrm{std} = \lambda_{\textrm{std}} + dt,\qquad
  \xi_{\textrm{std}} = \ker \alpha_{\textrm{std}}
$$
on $\R^3$ with coordinates $(x,y,t)$. Since $d\alpha_{\textrm{std}} =
dx\wedge dy$, the associated Reeb vector field is $R = \partial_t$. This
example is locally canonical: by Darboux's theorem, around every point
of a $3$-manifold $M$ with contact form $\alpha$ there exists a small
neighbourhood $U \subset M$ and a diffeomorphism $\psi: \Int
\DD^3_\eps \to U$ for some $\eps>0$ such that $\psi^*\alpha =
\alpha_{\std}$.   

Note that $\alpha_\std$ is invariant under translations in the $x$-
and $t$-directions, as well as under linear maps
$(x,y,t)\mapsto(cx,y/c,t)$ with $c>0$. Rescalings $(x,y,z)\mapsto
(cx,cy,c^2t)$ with $c>0$ rescale $\alpha_\std$ by $c^2$ and thus
preserve $\xi_\std$. 
In particular, around each point of a contact $3$-manifold
  $(M,\xi)$ we find charts $\psi: \Int
\DD^3_R \to U$ such that $\psi^*\xi = \xi_{\std}$ for the contact {\em
  structure} with arbitrarily large $R$.

\subsection{Contact realization of disk maps}

Let $\om=d\lambda$ be an exact symplectic form on a disk $D$ and
consider the contact form $\alpha:=\lambda+dt$ on $D\times\R$.  

\begin{lem}\label{lem:contact-disk-maps}
Let $\psi\in\Symp_c(D)$. Then for each sufficiently large constant $\tau>0$ there
exists a smooth family of contact forms $\alpha_s$ on
$D\times[0,\tau]$ with the following properties:
\begin{itemize}
\item $\ker\alpha_s=\ker\alpha$ for all $s\in[0,1]$;
\item $\alpha_0=\alpha$, and $\alpha_s=\alpha$ near $\p(D\times[0,\tau])$ for all $s\in[0,1]$;
\item the Reeb orbit for $\alpha_1$ starting at $(z,0)$ intersects
  $D\times\{\tau\}$ at the point $(\psi(z),\tau)$.  
\end{itemize}
\end{lem}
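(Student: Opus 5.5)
The plan is to realize $\psi$ as the time-$1$ map of a compactly supported Hamiltonian isotopy and then encode that isotopy in a conformal rescaling $\alpha_s=f_s\alpha$ of the contact form.

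\textbf{Step 1: a Hamiltonian for $\psi$.} By Lemma~\ref{lem:disk-maps}(b),(c) there is a smooth family of compactly supported functions $H_t$ on $D$ whose time-$1$ flow is $\psi$. Reparametrize time by a smooth nondecreasing function that is constant near $0$ and near $1$. Then we may assume $H_t\equiv 0$ for $t$ near $0$ and near $1$, and the time-$1$ map is still $\psi$. Extend $H_t$ by zero to $t\in\R$.

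\textbf{Step 2: the rescaled form.} Fix $\tau$ large and consider, on $D\times[0,\tau]$, the form
$$
\alpha_s=(1-sK)^{-1}\alpha,\qquad K(z,t)=\tfrac{1}{\tau}H_{t/\tau}(z).
$$
Its kernel is $\ker\alpha$ as long as $1-sK>0$. This holds for all $s\in[0,1]$ once $\tau>\max|H|$, which is where "sufficiently large $\tau$" enters. We have $\alpha_0=\alpha$, and $\alpha_s=\alpha$ near $\p(D\times[0,\tau])$ because $K$ vanishes near $\p D\times[0,\tau]$ and near $t=0,\tau$.

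\textbf{Step 3: the Reeb field.} For a general form $\beta=\lambda+(1-G)\,dt$ with $G$ small, one computes that the Reeb field is a positive multiple of $\p_t+X_{G_t}$, the sign depending on conventions. I expect this convention bookkeeping to be the main obstacle. One would like the Reeb field of $e^{g}\alpha$ or $(1-K)^{-1}\alpha$ to be parallel to $\p_t+X_K$ up to sign, but $(1-K)^{-1}\alpha$ is not literally of the form $\lambda+(1-G)dt$ unless $\lambda$ is also rescaled. So I would do the computation directly: for $\beta=h\alpha$, the Reeb field is $R_\beta=h^{-1}R_\alpha+Y$, where $Y\in\xi$ is determined by $i_Y d\alpha|_\xi=d(h^{-1})|_\xi$. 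Here $R_\alpha=\p_t$ and $\xi$ is spanned by $\p_x+y\,\p_t$ and $\p_y$ in the standard case, and more generally by lifts $v+\lambda(v)\p_t$ of vectors $v\in TD$. Projecting $R_\beta$ to $D$ then gives a positive multiple of the $t$-component times $X_{\pm h^{-1}}$, restricted to the slice $D\times\{t\}$. With $h^{-1}=1-sK$, the projected flow is generated by $\mp sX_{K}$ when time is measured by $t$. If the sign comes out wrong, replace $K$ by $-K$.

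\textbf{Step 4: the return point.} Flowing from $t=0$ to $t=\tau$ in the $t$-parametrization, the projection to $D$ follows the isotopy generated by $\frac{1}{\tau}X_{H_{t/\tau}}$, which is exactly $t\mapsto\psi_{t/\tau}$. So the orbit starting at $(z,0)$ reaches $(\psi(z),\tau)$. For this one needs the $t$-component of the Reeb field to stay positive. That holds because $R_\beta$ is positively transverse to $\xi$ and $dt(R_\beta)$ equals $h^{-1}$ minus a correction $\lambda(Y)$. The correction $\lambda(Y)$ is $O(\|dK\|)=O(1/\tau)$, since $\lambda$ is bounded on the compact disk. It is therefore small for $\tau$ large, which is a second use of the largeness of $\tau$. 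Finally, the orbit never leaves $D\times[0,\tau]$ sideways, because near $\p D$ the form is $\alpha$ and the flow is $\p_t$.
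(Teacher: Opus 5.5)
Your first two properties are fine: any positive multiple of $\alpha$ is a contact form with the same kernel, and $K$ vanishes near $\p(D\times[0,\tau])$. The gap is in Steps 3--4. In the $t$-parametrization, the Reeb field of a conformal rescaling $h\alpha$ does \emph{not} project to a Hamiltonian vector field of $\pm h^{-1}$. Set $g=h^{-1}$, use the paper's convention $i_{X_H}\om=-dH$, and let $L$ be the Liouville field defined by $i_L\om=\lambda$. A direct check gives
\[
  R_{h\alpha}=X_{g_t}+(\p_t g)\,L+\bigl(g-\lambda(X_{g_t})\bigr)\p_t .
\]
For $\lambda=-y\,dx$ this reads $-g_y\p_x+(g_x+y\,g_t)\p_y+(g-y\,g_y)\p_t$. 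For $g=1-sK$ the orbits therefore satisfy
\[
  \frac{dz}{dt}=\frac{-sX_{K_t}-s(\p_tK)\,L}{1-sK+s\lambda(X_{K_t})}.
\]
This differs from $-sX_{K_t}$ in two ways: by the Liouville term, and by the $z$-dependent denominator. The denominator is exactly the correction $dt(R_\beta)=h^{-1}-\lambda(Y)$ that you noticed but only used for positivity. Since $K$ and $X_{K_t}$ are $O(\tau^{-1})$ while $\p_tK=O(\tau^{-2})$, both errors are $O(\tau^{-2})$ in the velocity and $O(\tau^{-1})$ after time $\tau$. So your bottom-to-top map is close to $\psi$ but in general not equal to it. Replacing $K$ by $-K$ does not help, since the same two correction terms appear. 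The third property, which demands the exact point $(\psi(z),\tau)$, is therefore not established.

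Your suspicion that $(1-K)^{-1}\alpha$ is not of the form $\lambda+(1-G)\,dt$ was the right one, and the paper builds its proof on exactly that form. It takes $\alpha_s=\lambda+\bigl(1-s\sigma'(t)H_{\sigma(t)}\bigr)dt$, which is contact for $|\sigma'|$ small, i.e.\ $\tau$ large. For $s=1$ its Reeb field is \emph{exactly} a positive multiple of $\p_t+\sigma'(t)X_{H_{\sigma(t)}}$, so the bottom-to-top map is exactly $\psi_{\sigma(\tau)}=\psi$. The price is that $\ker\alpha_s\ne\ker\alpha$, and Gray's theorem repairs this. Since $\dot\alpha_s$ vanishes near $\p(D\times[0,\tau])$, the Gray isotopy $\varphi_s$ is the identity there. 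The forms $\varphi_s^*\alpha_s=\mu_s\alpha$ then have kernel $\ker\alpha$. Their Reeb flows are conjugate to those of $\alpha_s$ by a map fixing $D\times\{0\}$ and $D\times\{\tau\}$ pointwise, so the endpoint $(\psi(z),\tau)$ is unchanged. A conformal factor that works therefore does exist. But it is produced implicitly by the Gray isotopy, not by the explicit choice $h^{-1}=1-sK$.
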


\begin{proof}
By Lemma~\ref{lem:disk-maps}(b) and (c), there exists a smooth family
of compactly supported functions $H_t:D\to\R$, $t\in[0,1]$, whose
Hamiltonian flow $\psi_t$ satisfies $\psi_1=\psi$.   
Pick $\tau>0$ and a smooth function $\sigma:[0,\tau]\to[0,1]$ that
equals $0$ near $0$ and $1$ near $\tau$. 
The rescaled flow $\psi_{\sigma(t)}$ satisfies
$\frac{d}{dt}\psi_{\sigma(t)} = \sigma'(t)X_{H_{\sigma(t)}}$.
For $s\in[0,1]$ define the $1$-form
$$
  \alpha_s := \lambda + dt -s\sigma'(t)H_{\sigma(t)}dt
$$
on $D\times[0,\tau]$. We compute
\begin{align*}
  d\alpha_s &= \om + s\sigma'(t)dt\wedge dH_{\sigma(t)}, \cr
  \alpha_s\wedge d\alpha_s
  &= \Bigl(\bigl(1-s\sigma'(t)H_{\sigma(t)}\bigr)\om +
  s\sigma'(t)dH_{\sigma(t)}\wedge\lambda\Bigr) \wedge dt. 
\end{align*}
Thus $\alpha_s\wedge d\alpha_s>0$ for $|\sigma'|$ sufficiently small,
which can be arranged for $\tau$ large. Moreover,
$$
  i_{\p_t+\sigma'(t)X_{H_{\sigma(t)}}}d\alpha_1 =
  \sigma'(t)\Bigl(i_{X_{H_{\sigma(t)}}}\om + dH_{\sigma(t)}\Bigr) = 0,
$$
so the Reeb vector field of $\alpha_1$ is proportional to
$\p_t+\sigma'(t)X_{H_{\sigma(t)}}$ by a positive function
$D\times[0,\tau]\to\R_+$. This implies the third property, the first
two being clear. 
Finally, we use Gray's theorem to modify the $\alpha_s$ so that they
define $\xi=\ker\alpha$. 
\end{proof}

\section{Dynamical handlebodies and Reeb handlebodies}

\subsection{Dynamical handlebodies}\label{ss:dyn-handlebody}
In this section, we shall discuss the construction of a certain class
of $3$-dimensional handlebodies equipped with a $1$-dimensional
foliation, which we refer to as dynamical handlebodies. The definition
is an abstraction of the construction
in~\cite{gonzalez2025topological} to general gluing maps. 

We will write $0$-handles as $B_q=\DD^2\times \DD^1$, and $1$-handles as
$H_e=D_e\times \DD^1_\tau$ for a disk $D_e$ 
and $\tau>0$ (which may depend on $e$).
We denote coordinates on $0$- and $1$-handles by $(z,t)$ and write
$\FF_\std=\la\p_t\ra$ for the standard vertical foliation. 
We set
$$
  \p^\pm B_q = \DD^2\times\{\pm 1\},\qquad \p^\pm H_e = D_e\times\{\pm \tau\}
$$
and identify them with $\DD^2$, resp.~$D_e$, via the canonical translations. 

\begin{defn}[see Figure~\ref{fig:dyn-handlebody}]\label{def:dyn-handlebody}
A {\em dynamical handlebody} $W$ consists of the following data:
\begin{itemize}
\item a finite directed graph $G$ with vertex set $Q$ and edge set $E$
  and a preferred vertex $q_0 \in Q$;
\item a $0$-handle $B_q=\DD^2\times \DD^1$ for each vertex $q\in Q$;
\item for each edge $e\in E$ from $q$ to $q'$, a $1$-handle $H_e$
  equipped   with an oriented $1$-dimensional foliation $\FF_e$
and embeddings
$$
  \psi_e^-:\p^-H_e\into \Int\p^+B_q,\qquad
  \psi_e^+:\p^+H_e\into \Int\p^-B_{q'}
$$
\end{itemize}
satisfying the following conditions:
\begin{itemize}
\item[(i)] the images $D_e^-:=\psi_e^-(\p^-H_e)\subset\p^+B_q$ of all
  outgoing edges $e$ at $q$ are disjoint, and the images
  $D_e^+:=\psi_e^+(\p^+H_e)\subset\p^-B_{q'}$ of all incoming 
  edges $e$ at $q'$ are disjoint; 
\item[(ii)] $\FF_e=\FF_\std$ near $\p^\pm H_e$, and all leaves of $\FF_e$
  run from $\p^-H_e$ to $\p^+H_e$ with holonomy
  $h_e:\p^-H_e\to\p^+H_e$ such that
  $$
    \phi_e:=\psi_e^+\circ h_e\circ(\psi_e^-)^{-1}:D_e^-\to D_e^+
    \quad\text{satisfies}\quad \phi_e^*\om_\std=\om_\std.
  $$
\end{itemize}
An {\em isomorphism} $G:W\to\wt W$ of dynamical handlebodies with the
same underlying graph consists of diffeomorphisms
$G_e:H_e\stackrel{\cong}\longrightarrow\wt H_e$ for all edges $e$ such
that $(G_e)_*\FF_e=\wt\FF_e$, $G_e(z,t)=(g_e^\pm(z),t)$ near $\p^\pm
H_e$ for some diffeomorphisms $g_e^\pm: D_e \to \wt D_e$, and
$\psi_e^\pm=\wt\psi_e^\pm\circ g_e^\pm$.  
\end{defn}

\begin{figure}[h]
\centering
\includegraphics[width=0.7\linewidth]{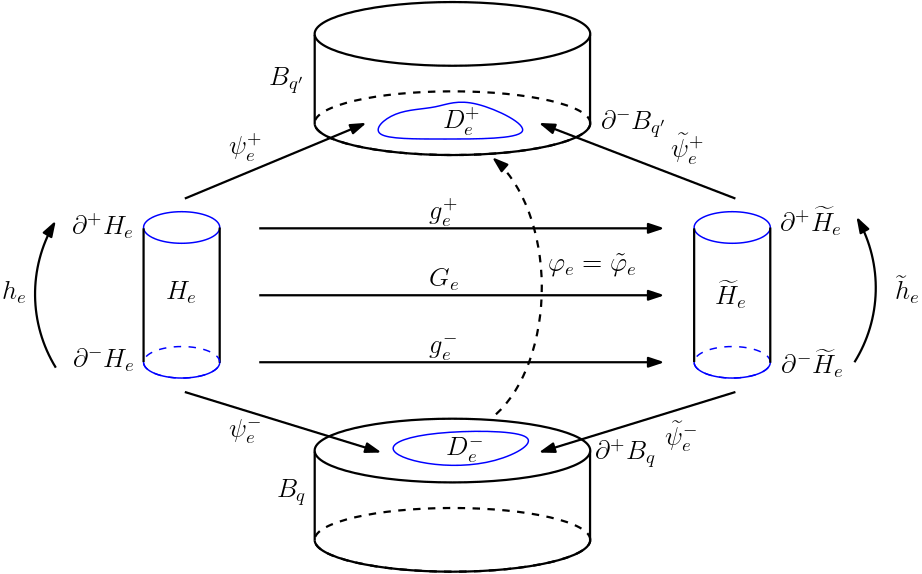}
\caption{Allowed handle attachement in a tubular handlebody}
\label{fig:dyn-handlebody}
\end{figure}

\begin{rem}\label{rem:dyn-handlebody}
Here are some comments and explanations on this definition. 

(a) The holonomy of a dynamical handlebody $W$ along $H_e$ defines an
area preserving diffeomorphism
$$
  h_e:(\p^-H_e,\om_e^-)\to(\p^+H_e,\om_e^+)\quad\text{where}\quad
  \om_e^\pm:=(\psi_e^\pm)^*\om_\std. 
$$
(b) We can view $W$ as the space obtained by gluing all $1$-handles 
$H_e$ to the adjacent $0$-handles via the maps $\psi_e^\pm$. 
It is naturally equipped with the following structures:
\begin{itemize}
\item a foliation $\FF$ given by $\FF_\std$ on $0$-handles and $\FF_e$
  on $1$-handles;
\item an $\FF$-invariant transverse area form $\omega \in \Omega^2(W)$
  that agrees with $\om_\std$ on $0$-handles;
\item a canonical cross section $D_0=\DD^2\times\{0\}\subset B_{q_0}$
  and a partially defined area preserving return map
\begin{equation}\label{eq:partial-return}
  \Phi_W:(D_0,\om_\std)\dashrightarrow (D_0,\om_\std).
\end{equation}
\end{itemize}
(c) Each dynamical handlebody is isomorphic to one for which each
$1$-handle is given by $H_e=\DD^2\times\DD^1$ with $\FF_e=\FF_\std$,
and therefore $h_e=\Id$ and $\om_e^-=\om_e^+=:\om_e$. 
However, the freedom on the $1$-handles in the definition will be
useful for the construction of dynamical handlebodies. 

(d) We can equip a dynamical handlebody $W$ with a volume form 
and a volume preserving vector field generating $\FF$. In the
situation of (c), we can take the vector field $\p_t$ and the volume
form given by $\om_\std\wedge dt$ on $0$-handles and $\om_e\wedge dt$
on $1$-handles. However, neither the
vector field nor the volume form are canonical. 
\end{rem}

Dynamical handlebodies modulo isomorphism are captured by the
following definition and lemma (which follows directly from the
definitions). 

\begin{defn}\label{def:abs-dyn-handlebody}
An {\em abstract dynamical handlebody}
consists of the following data:
\begin{itemize}
\item a finite directed graph $G$ with vertex set $Q$ and edge set $E$
  and a preferred vertex $q_0$;
\item for each edge $e\in E$ from $q$ to $q'$, embedded disks
  $D_e^\pm\subset\Int\DD^2$ and an area preserving diffeomorphism
  $$
    \phi_e:(D_e^-,\om_\std)\stackrel{\cong}\longrightarrow (D_e^+,\om_\std).
  $$
\end{itemize}
These data must satisfy the condition that, for all $q \in Q$ with
incoming edges $e_1, \ldots, e_n$ and outgoing edges $e_1', \ldots,
e_m'$, $D_{e_i}^- \cap D_{e_j}^- = \varnothing$ for all $1 \leq i, j
\leq n$ with $i \neq j$, and $D_{e_i'}^+ \cap D_{e_j'}^+ =
\varnothing$ for all $1 \leq i, j \leq m$ with $i \neq j$. 
\end{defn}

Observe that a dynamical handlebody $W$ induces a uniquely defined
abstract dynamical handlebody $W_{\abstr}$, called the
\emph{associated abstract dynamical handlebody}, with the same graph
and disks $D_e^+ = \psi_e^-(\p^-H_e)\subset\DD^2$ and $D_e^- =
\psi_e^+(\p^+H_e)\subset\DD^2$. 

\begin{lem}
Every abstract dynamical handlebody is associated to a dynamical handlebody.
Two dynamical handlebodies $W$ and $W'$ are isomorphic if and only if $W_{\abstr} = W_{\abstr}'$.
\qed
\end{lem}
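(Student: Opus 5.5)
Both parts come down to the fact that, on each $1$-handle, the only data that matter are the attaching disks and the area preserving map $\phi_e$. All the rest (the shape of $D_e$, the length $\tau$, the foliation $\FF_e$ in the interior) can be changed by a diffeomorphism. I will fix the convention that $W_{\abstr}$ records $D_e^-=\psi_e^-(\p^-H_e)\subset\p^+B_q\cong\DD^2$, $D_e^+=\psi_e^+(\p^+H_e)\subset\p^-B_{q'}\cong\DD^2$ and $\phi_e=\psi_e^+\circ h_e\circ(\psi_e^-)^{-1}$. Condition (i) of Definition~\ref{def:dyn-handlebody} is then exactly the disjointness condition of Definition~\ref{def:abs-dyn-handlebody}, and condition (ii) says $\phi_e$ is area preserving.

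\emph{Existence.} Given abstract data, take the same graph and $0$-handles. For each edge $e$ set $D_e:=D_e^-$ and $H_e:=D_e\times\DD^1$ with $\FF_e=\FF_\std$, so that $h_e$ is the identity under the canonical identifications $\p^\pm H_e\cong D_e$. Define $\psi_e^-:=$ the inclusion $D_e^-\into\p^+B_q$ and $\psi_e^+:=\phi_e:D_e\to D_e^+\subset\p^-B_{q'}$. Then $\psi_e^+\circ h_e\circ(\psi_e^-)^{-1}=\phi_e$, which is area preserving, and disjointness is inherited. This gives a dynamical handlebody whose associated abstract handlebody is the given one.

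\emph{Isomorphism invariance.} Suppose $G:W\to\wt W$ is an isomorphism. Since $\psi_e^\pm=\wt\psi_e^\pm\circ g_e^\pm$, the images agree: $D_e^\pm=\wt D_e^\pm$. The condition $(G_e)_*\FF_e=\wt\FF_e$, together with $G_e$ being of the form $(g_e^\pm(z),t)$ near the ends, gives $\wt h_e\circ g_e^-=g_e^+\circ h_e$. Hence
$$\wt\phi_e=\wt\psi_e^+\wt h_e(\wt\psi_e^-)^{-1}=\psi_e^+(g_e^+)^{-1}\wt h_e\, g_e^-(\psi_e^-)^{-1}=\psi_e^+h_e(\psi_e^-)^{-1}=\phi_e.$$

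\emph{Converse (the main step).} Suppose $W_{\abstr}=W'_{\abstr}$. By composing isomorphisms it is enough to show that each $W$ is isomorphic to the standard model $W^s$ built above from $W_{\abstr}$. On an edge $e$, set $g_e^-:=\psi_e^-:D_e\to D_e^-$ and $g_e^+:=\phi_e^{-1}\circ\psi_e^+:D_e\to D_e^-$; these satisfy $\psi_e^\pm=\psi_e^{s,\pm}\circ g_e^\pm$. We need a diffeomorphism $G_e:D_e\times\DD^1_\tau\to D_e^-\times\DD^1$ that carries leaves to vertical leaves and has the prescribed ends. Define it via the leaf map: a point $p$ lies on the leaf through some $z\in\p^-H_e$; send $p$ to $(\psi_e^-(z),\theta(p))$, where $\theta$ is a leafwise parameter. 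For $\theta$, take a vector field spanning $\FF_e$ equal to $\p_t$ near the ends, and rescale its time parameter smoothly to $[-1,1]$ by a function that is an affine shift $t\mapsto t\mp(\tau-1)$ near the ends. The constraint $\phi_e\circ h_e=h_e$ in the model is automatic: the leaf ending at $h_e(z)$ is sent to the vertical leaf over $\psi_e^-(z)$, and near $+\tau$ we get $\psi_e^-h_e^{-1}=\phi_e^{-1}\psi_e^+=g_e^+$ as required.

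The only delicate point is smoothness and invertibility of $G_e$. This follows from the leaves running end to end transversely, so the leaf space is identified with $\p^-H_e$ smoothly, and from the parametrization being a smooth diffeomorphism leafwise. The mismatch between the "$(z,t)$ near the ends" form and the length $\tau$ is handled by the affine shift above, using that the definition identifies $\p^\pm H_e$ with $D_e$ by translations.
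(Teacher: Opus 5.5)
Your proof is correct and is essentially the unpacking of the definitions that the paper leaves to the reader (the lemma is stated with \qed as following directly from the definitions): the invariance of $\phi_e$ under isomorphism, and the normalization of every $W$ to the vertical model with $h_e=\Id$ (the content of Remark~\ref{rem:dyn-handlebody}(c)), are exactly what is needed. You are right to read the end condition on $G_e$ modulo the canonical $t$-translations, since the $\tau_e$ of $W$ and $W'$ may differ, and to read $W_{\abstr}$ as recording $D_e^\pm=\psi_e^\pm(\p^\pm H_e)$ together with $\phi_e$; the phrase ``$\phi_e\circ h_e=h_e$'' is garbled, but the identity you actually verify, $\psi_e^-\circ h_e^{-1}=\phi_e^{-1}\circ\psi_e^+=g_e^+$, is the correct one.
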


\subsection{From Turing machines to dynamical handlebodies}\label{ss:Turing-dyn-handlebody}

A reversible Turing machine with finite state machine $\cG_M$ gives rise to an abstract
dynamical handlebody as follows.
Let $G$ be the graph obtained from the graph underlying $\cG_M$ by
gluing $q_i$ and $q_f$ to a single vertex $q_0$. The edges of $G$ are
the labelled edges of $\cG_M$. 
For each edge $e$ from $q$ to $q'$ labelled by $(\alpha,\beta,\eps)$,
using the notation from~\S\ref{ss:basic-ops}, we define
$$
  \phi_e:=\phi_{\alpha,\beta,\eps}=\phi_\eps\circ\phi_{\alpha,\beta}:
  (D_\alpha,\om_\std) \into (\DD^2,\om_\std),\qquad  
  D_e^-:=D_\alpha,\quad D_e^+:=\phi_e(D_e^-).
$$
It remains to verify the disjointness condition in
Definition~\ref{def:abs-dyn-handlebody}.
The part concerning the outgoing edges is clear. 
The part concerning the incoming edges follows from reversibility:
By Lemma~\ref{lem:rev}, each vertex $q'$ has at most two
different incoming edges $e$ and $\wt e$ from vertices $q$ and $\wt q$, whose labels
$(\alpha,\beta,\eps)$ and $(\wt\alpha,\wt\beta,\wt\eps)$ satisfy
$\beta\neq\wt\beta$ and $\eps=\wt\eps$.
Then the disks $\phi_{\alpha,\beta}(D_\alpha) \subset D_\beta$ and
$\phi_{\wt\alpha,\wt\beta}(D_{\wt\alpha}) \subset D_{\wt\beta}$ are disjoint, hence so are
their images $D_e^+=\phi_\eps\circ\phi_{\alpha,\beta}(D_\alpha)$ and
$D_{\wt e}^+=\phi_\eps\circ\phi_{\wt\alpha,\wt\beta}(D_{\wt\alpha})$
under the diffeomorphism $\phi_\eps$. 

Note that any dynamical handlebody $W$ realizing this abstract dynamical 
handlebody has a $0$-handle $B_q=\DD^2\times\DD^1$ for each vertex
$q\in Q\setminus\{q_i,q_f\}\cup\{q_0\}$. We think of
$B_{q_0}=\DD^2\times\DD^1$ as obtained by gluing the half-handles
$\DD^2\times[0,1]$ and $\DD^2\times[-1,0]$ associated to the vertices
$q_i$ and $q_f$. 
In view of~\eqref{eq:phi-s}, the dynamics on $W$ contains the finite
state machine $\cG_M$ as a subsystem by restricting to
$\Lambda\subset C^2\times \{0\} \subset\DD^2\times\DD^1$ in
each $0$-handle. 
In particular, the partially defined return map $\Phi_W: D_0
\dashrightarrow D_0$ of the canonical section $D_0$
in~\eqref{eq:partial-return} computes the partially defined function
$f_M$ associated to the Turing machine $M$. 

\subsection{Reeb handlebodies}\label{ss:Reeb-handlebody}

Recall that on an embedded surface $S$ in a contact $3$-manifold $(M,\xi)$, 
the intersection $TS\cap\xi$ defines a singular $1$-dimensional
distribution called the {\em characteristic distribution} (see
e.g.~\cite{geiges2008introduction}). A contact $3$-manifold is said to be
\emph{tight} if it contains no embedded disk 
whose characteristic foliation has a closed leaf~\cite[Theorem
  1.4.1]{Eliashberg92}. 
For example, the standard contact structure $\xi_\std$ on $\R^3$
defined by $\alpha_\std=-y\,dx+dt$ is tight.

\begin{ex}\label{ex:char-fol}
On the cylinder $S=\{(x,y,t)\mid x^2+y^2=r^2\}$ in $(\R^3,\xi_\std)$,
the leaves of the characteristic foliation are downward spirals given
in polar coordinates $(r,\theta)$ by $\frac{dt}{d\theta}=-r^2\sin^2\theta\leq 0$.
Hence $t(\theta+2\pi)-t(\theta)=-\pi r^2$, and any loop $t={\rm
const}$ on $S$ can be $C^\infty$-perturbed to a loop transverse to the
characteristic foliation.  
\end{ex}

For a $1$-handle $H=D\times\DD^1_\tau$, we denote its vertical boundary by
$$
  \p^vH := \p D\times\DD^1_\tau.
$$
Given a dynamical handlebody $W$, we extend the maps $\psi_e^\pm$ to 
embeddings $\Psi_e^\pm: D_e\times\R\to\DD^2\times\R$
via
$$
  \Psi_e^-(z,t):=\bigl(\psi_e^-(z),t+\tau+1\bigr),\qquad
  \Psi_e^+(z,t):=\bigl(\psi_e^+(z),t-\tau-1\bigr).
$$

\begin{defn}\label{def:Reeb-handlebody}
A {\em Reeb handlebody} $(W,\alpha)$ is a dynamical handlebody $W$
together with tight contact forms $\alpha_e$ on the $1$-handles $H_e$ satisfying 
\begin{itemize}
\item[(i)] $\alpha_e=(\Psi_e^\pm)^*\alpha_\std$ near $\p^\pm H_e$;
\item[(ii)] $\ker d\alpha_e = \FF_e$;
\item[(iii)] the characteristic foliation on $\p^vH_e$ contains
  transverse loops in the neighbourhoods of $\p^\pm H_e$ of property (i).
\end{itemize}
\end{defn}
Property (i) ensures that the $\alpha_e$ on the $1$-handles fit
together with $\alpha_\std$ on the $0$-handles to a contact form
$\alpha$ on the glued space $W$.
By property (ii), the Reeb vector field $R$ of $\alpha$ generates the 
foliation $\FF$. 
In particular, following the flow lines of $R$ we recover the
partially defined return map $\Phi_W:D_0\dashrightarrow D_0$ of $W$.
Moreover, the transverse area form $d\alpha$ agrees with the
one of the dynamical handlebody $W$. 
Note that $\alpha\wedge d\alpha$ is an $R$-invariant volume form.
Property (iii) is a technical condition included to apply Eliashberg's
uniqueness theorem for tight contact structures on the $3$-ball
agreeing on its boundary, as in the following lemma.

\begin{lem}\label{lem:contact-tubes}
Let $\xi_0,\xi_1$ be tight contact structures on a $1$-handle $H$ that
are transverse to $\p^vH$ and agree near $\p^\pm H$. Assume that the
characteristic foliation on $\p^vH$ contains transverse loops
$\gamma^\pm$ in the neighbourhoods of $\p^\pm H$ where $\xi_0=\xi_1$. 
Then there exists a diffeomorphism $\phi:H\to H$ with $\phi=\Id$ near
$\p^\pm H$ such that $\phi^*\xi_1=\xi_0$.
\end{lem}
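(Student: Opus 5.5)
The plan is to reduce to Eliashberg's uniqueness theorem for tight contact structures on the $3$-ball with fixed boundary germ. The boundary of $H = D\times\DD^1_\tau$ is a $2$-sphere made of $\p^-H$, $\p^+H$ and $\p^vH$. Along $\p^\pm H$ the two structures already coincide, but along $\p^vH$ they are only assumed to be transverse, so they may induce different characteristic foliations there. The first task is therefore to make $\xi_0$ and $\xi_1$ agree near all of $\p H$ by an isotopy supported away from $\p^\pm H$. After that, Eliashberg's theorem applies on the ball.

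\textbf{Step 1: cut down to a smaller ball.} Use the transverse loops $\gamma^\pm \subset \p^vH$, which lie in the collars where $\xi_0=\xi_1$. Each $\gamma^\pm$ bounds, in $\p H$, a disk $\Delta^\pm$ containing $\p^\pm H$; on this disk the characteristic foliations of $\xi_0$ and $\xi_1$ coincide. The annulus $A\subset\p^vH$ between $\gamma^-$ and $\gamma^+$ is where they may differ. For $i=0,1$, the characteristic foliation $(\xi_i)_A$ is transverse to the boundary circles $\gamma^\pm$, with the same inflow/outflow directions for both. The intended route is to push $A$ slightly into $H$, or equivalently modify $\xi_1$ by a contact isotopy near $A$, so that $(\xi_1)_A=(\xi_0)_A$. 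Then Giroux's/Gray's local normal form makes the two germs agree near $\p H$.

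\textbf{Step 2: match the foliations on $A$.} This is the step I expect to be the main obstacle. I would first try to show that both foliations on $A$ have no closed leaves and no singularities. They are nonsingular by transversality of $\xi_i$ to $\p^vH$; indeed $\xi_i$ is transverse to $\p^vH$ in the sense that $T\p^vH\neq\xi_i$, so $T\p^vH\cap\xi_i$ is a line field everywhere. They have no closed leaves because a closed leaf would be an embedded loop isotopic to $\gamma^\pm$ (the only essential class on the annulus) and Legendrian. That loop would bound, together with $\Delta^\pm$, a disk in $\p H$; after smoothing, this disk is an overtwisted disk or violates the Bennequin-type inequality, contradicting tightness. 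Also, every leaf enters through one of $\gamma^\pm$ and exits through the other.

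Two nonsingular line fields on $A$ with all leaves running from one boundary circle to the other, and agreeing near $\p A$, are conjugate by a diffeomorphism of $A$ isotopic to the identity rel $\p A$. This is done by matching leaves via their holonomy from $\gamma^-$ to $\gamma^+$, possibly after adjusting a Dehn twist; the twist can be absorbed by choosing the identification of leaves appropriately. Giroux flexibility then realizes this conjugacy as an isotopy of $A$ inside $H$, through surfaces $C^0$-close to $A$ and fixed near $\gamma^\pm$, taking $(\xi_1)_A$ to $(\xi_0)_A$. Composing gives $\phi_1$ with $\phi_1=\Id$ near $\p^\pm H$ such that $\phi_1^*\xi_1$ and $\xi_0$ induce the same characteristic foliation on $\p H$, hence agree near $\p H$ after a further Gray-type adjustment supported near $\p H$.

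\textbf{Step 3: apply Eliashberg.} $H$ is a $3$-ball with the tight structures $\xi_0$ and $\phi_1^*\xi_1$ agreeing near $\p H$. Eliashberg's uniqueness theorem gives an isotopy rel a neighbourhood of $\p H$ to a diffeomorphism $\phi_2$ with $\phi_2^*\phi_1^*\xi_1=\xi_0$. Setting $\phi=\phi_1\circ\phi_2$ gives the lemma.
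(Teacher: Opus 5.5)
Your outline is the paper's: nonsingular with no closed leaves, so all leaves run from $\gamma^-$ to $\gamma^+$; match the characteristic foliations on $\p H$ by a diffeomorphism fixed near $\p^\pm H$; use Giroux to match germs; apply Eliashberg. However, the step you flag as the main obstacle actually fails. A diffeomorphism $\phi\colon H\to H$ with $\phi=\Id$ near $\p^\pm H$ preserves $\p^vH$ and fixes collars of both boundary circles pointwise. If $\phi^*\xi_1=\xi_0$, it maps characteristic leaves of $\xi_0$ on $\p^vH$ to those of $\xi_1$. The $\xi_0$-leaf and the $\xi_1$-leaf through $p\in\gamma^-$ coincide near $\p^-H$, so $\phi$ maps the first onto the second. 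Since $\phi=\Id$ near $\p^+H$ and the foliations agree there, the two leaves must also coincide near $\p^+H$. Hence the holonomies $h_0,h_1\colon\gamma^-\to\gamma^+$ must be equal, and nothing in the hypotheses forces this. So your claim that two such product foliations on $A$, agreeing near $\p A$, are conjugate rel $\p A$ is false. A Dehn twist (which does extend over $H$) only shifts the lift of the holonomy to the universal cover by $2\pi k$; it cannot absorb $h_1\circ h_0^{-1}\neq\Id$. Giroux flexibility does not rescue this either. It moves $A$ off $\p H$ into the interior, so at best it yields a contact embedding of $(H,\xi_0)$ onto a smaller region of $(H,\xi_1)$, not a self-diffeomorphism of $H$. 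Likewise, ``modifying $\xi_1$ by an isotopy near $A$'' is again conjugation by a diffeomorphism preserving $\p^vH$ and meets the same obstruction.

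This is not only a defect of your write-up. The paper's proof makes the same leap in the sentence ``It follows that there exists a diffeomorphism\dots'', and the lemma is false as stated. On $H=\DD^2_{r_0}\times[-\tau,\tau]$ with polar coordinates $(r,\theta)$, let $\xi_0=\ker\bigl(dt+\tfrac12r^2d\theta\bigr)$ and $\xi_1=\ker\bigl((1+\chi'(t))\,dt+\tfrac12r^2d\theta\bigr)$. Here $\chi$ is constant near $\pm\tau$, $1+\chi'>0$, and $\chi(\tau)-\chi(-\tau)=\delta$. Since $\xi_1$ is the pullback of the standard tight structure $\ker\bigl(ds+\tfrac12r^2d\theta\bigr)$ on $\R^3$ under the embedding $(z,t)\mapsto(z,t+\chi(t))$, both structures are tight. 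Both are transverse to $\p^vH$, they agree near $\p^\pm H$, and horizontal circles there are transverse loops. On $\{r=r_0\}$ the characteristic leaves satisfy $d\theta/dt=-2/r_0^2$, resp.\ $d\theta/dt=-2(1+\chi'(t))/r_0^2$. So the holonomies are rotations by $-4\tau/r_0^2$, resp.\ $-(4\tau+2\delta)/r_0^2$, which differ modulo $2\pi$ unless $\delta\in\pi r_0^2\Z$. Replacing $\alpha_e$ by $\alpha_e+\chi'(t)\,dt$ in Lemma~\ref{lem:Reeb-handlebody} produces the same phenomenon for Corollary~\ref{cor:contact-tubes}. The missing hypothesis is $h_0=h_1$. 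With it your Step~2 becomes correct: match leaves by their point on $\gamma^-$, and points along leaves by a transverse coordinate common near $\p A$. The resulting annulus map is isotopic rel $\p A$ to a power of a Dehn twist and extends over $H$, and Step~3 then goes through. For Lemma~\ref{lem:Reeb-handlebody-emb}, the matching must be arranged through the choice of the thickenings $\phi_e$. In the model near a transverse arc, shrinking the tube in the middle increases the rotation without bound. Alternatively, a contact embedding $(H,\xi_0)\into(H,\xi_1)$ fixed near $\p^\pm H$ would also suffice there.
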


\begin{proof}
For $i=0,1$ consider the characteristic foliation of $\xi_i$ on $\p^vH$.
It has no singularities because $\xi_i$ is transverse to $\p^vH$, and
it has no closed leaves because $\xi_i$ is tight
(see e.g.~\cite[Proposition 3.5.1]{Eliashberg92}). 
Therefore, each leaf connects the transverse loop $\gamma^-$ to
$\gamma^+$. 
It follows that there exists a diffeomorphism $\phi:H\to H$ with
$\phi=\Id$ near $\p^\pm H$ such that $\phi^*\xi_1$ and
$\xi_0$ define the same characteristic foliation on $\p H$. 
Since the characteristic foliation on $\p H$ determines a contact
structure on a neighbourhood
(see e.g.~\cite[Proposition 1.2]{Giroux91}),
we can arrange that $\phi^*\xi_1=\xi_0$ near $\p H$. 
Since $\phi^*\xi_1$ and $\xi_0$ are tight, 
Eliashberg's uniqueness theorem~\cite[Theorem 2.1.3]{Eliashberg92}
implies that by a further isotopy of $\phi$ rel $\p H$ we can arrange
$\phi^*\xi_1=\xi_0$ on $H$. 
\end{proof}

Applying this lemma to all $1$-handles, we obtain 

\begin{cor}\label{cor:contact-tubes}
If $(W,\alpha_i)$, $i=0,1$ are two Reeb handlebodies with the same
underlying dynamical handlebody $W$ and contact structures
$\xi_i=\ker\alpha_i$, then there exists a diffeomorphism $\phi:W\to W$
fixed near the $0$-handles such that $\phi^*\xi_1=\xi_0$.
\qed
\end{cor}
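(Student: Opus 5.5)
The plan is to apply Lemma~\ref{lem:contact-tubes} to each $1$-handle $H_e$ separately and then glue the resulting diffeomorphisms with the identity on the $0$-handles. By Definition~\ref{def:Reeb-handlebody}(i), on $0$-handles both $\alpha_0$ and $\alpha_1$ equal $\alpha_\std$. Near $\p^\pm H_e$ both $\alpha_{0,e}$ and $\alpha_{1,e}$ equal $(\Psi_e^\pm)^*\alpha_\std$, so $\xi_0=\xi_1$ on the $0$-handles and on a collar of $\p^\pm H_e$ in every $1$-handle. It therefore suffices to produce, for each edge $e$, a diffeomorphism $\phi_e:H_e\to H_e$ that is the identity near $\p^\pm H_e$ and satisfies $\phi_e^*\xi_1=\xi_0$. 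We then set $\phi=\phi_e$ on $H_e$ and $\phi=\Id$ on the $0$-handles. This is a diffeomorphism of $W$ that is fixed near the $0$-handles (the collars near $\p^\pm H_e$ being glued to the $0$-handles), and it satisfies $\phi^*\xi_1=\xi_0$ everywhere.

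To apply Lemma~\ref{lem:contact-tubes} on $H_e$ we check its hypotheses in turn.
\begin{itemize}
\item \emph{Tightness} of $\xi_i|_{H_e}$ holds because the forms $\alpha_{i,e}$ are tight by Definition~\ref{def:Reeb-handlebody}.
\item \emph{Agreement near $\p^\pm H_e$} was established in the first paragraph.
\item \emph{Transverse loops} $\gamma^\pm$ in the collar region exist by Definition~\ref{def:Reeb-handlebody}(iii). Since $\xi_0=\xi_1$ on these collars, the characteristic foliations there coincide, so the same loops serve for both structures.
\item \emph{Transversality of $\xi_i$ to $\p^vH_e$} is the remaining condition.
\end{itemize}

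Transversality is the step I expect to be the main obstacle, because it is not listed explicitly in Definition~\ref{def:Reeb-handlebody}. I would derive it from (ii). Since $\ker d\alpha_{i,e}=\FF_e$, the Reeb field $R_i$ is tangent to $\FF_e$. Near $\p^\pm H_e$ the foliation is vertical, so $R_i$ is tangent to $\p^vH_e$ there. Away from the ends, I would try to use that leaves of $\FF_e$ run from $\p^-H_e$ to $\p^+H_e$, so that $\p^vH_e$ is tangent to the Reeb field; whether this holds depends on the construction of the handle. If it does, then at a point of $\p^vH_e$ where $\xi_i=T\p^vH_e$ we would get $R_i\in\xi_i$, contradicting $\alpha_i(R_i)=1$. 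Hence $\xi_i$ would be transverse to $\p^vH_e$.

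If this is not automatic, I would read transversality as an implicit standing assumption on Reeb handlebodies: condition (iii) speaks of the characteristic foliation on $\p^vH_e$ being nonsingular with transverse loops, which only makes sense in that setting. In that case, the isotopy can be chosen so that $\p^vH_e$ is a union of Reeb orbits. Once transversality is in hand, Lemma~\ref{lem:contact-tubes} applies edge by edge, and the gluing described in the first paragraph completes the proof.
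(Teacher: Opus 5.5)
Your proposal is correct and is essentially the paper's argument: apply Lemma~\ref{lem:contact-tubes} to each $1$-handle and extend by the identity over the $0$-handles. The transversality hypothesis, which the paper leaves implicit, needs no hedging: if $\FF_e$ were transverse to $\p^vH_e$ at some point, the leaf through that point would end on $\p^vH_e$, contradicting that all leaves run from $\p^-H_e$ to $\p^+H_e$; hence the Reeb fields are tangent to $\p^vH_e$, and your argument then gives that $\xi_i$ is transverse to $\p^vH_e$.
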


\begin{rem}
(a) The diffeomorphism $\phi$ is in general not an isomorphism of
dynamical handlebodies in the sense
of~\S\ref{ss:dyn-handlebody} because it need not preserve the
Reeb foliation $\FF$. 
For instance, the intersection numbers between Reeb orbits and
characteristics on $\p^vH_e$ provide obstructions to matching both the
contact structures and the Reeb foliations on a $1$-handle $H_e$. \\
(b) Property (iii) in Definition~\ref{def:Reeb-handlebody} may not be
needed to obtain the conclusion of Corollary~\ref{cor:contact-tubes}.
\end{rem}

\subsection{From dynamical to Reeb handlebodies}\label{ss:dyn-Reeb}

\begin{lem}\label{lem:Reeb-handlebody}
Let $W$ be a dynamical handlebody.
Then there exist a contact form
$\alpha$ on $W$ making $(W,\alpha)$ a Reeb handlebody.
\end{lem}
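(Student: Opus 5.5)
By Remark~\ref{rem:dyn-handlebody}(c), an isomorphism of dynamical handlebodies does not change the glued space $W$, its foliation $\FF$, or the gluing maps $\psi_e^\pm$. So we may replace $W$ by an isomorphic model whose $1$-handles carry a foliation of our choosing. The plan is to build, for each edge $e$, a new $1$-handle $H_e=D_e\times\DD^1_\tau$ with $\tau$ large, together with a contact form $\alpha_e$ whose Reeb foliation realizes the holonomy $\phi_e$. Conditions (i)--(iii) of Definition~\ref{def:Reeb-handlebody} will be checked directly.

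\emph{Step 1: reduction to a Hamiltonian diffeomorphism.} Identify $D_e:=D_e^-\subset\DD^2$ via $\psi_e^-$. Then $\phi_e:D_e^-\to D_e^+$ is an area preserving embedding into $\Int\DD^2$. Choose a slightly larger disk $D'\subset\Int\DD^2$ containing $D_e^-\cup D_e^+$ in its interior; if needed, first shrink $D_e^\pm$ slightly, which is allowed by the freedom in choosing the handles. Apply Lemma~\ref{lem:disk-maps}(a) to extend $\phi_e$ to some $\psi\in\Symp_c(\DD^2)$.

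\emph{Step 2: realize $\psi$ as a Reeb holonomy.} Apply Lemma~\ref{lem:contact-disk-maps} with $D=\DD^2$, $\lambda=\lambda_\std$ and $\tau$ large, but use only the explicit form
$$\alpha_1=\lambda_\std+dt-\sigma'(t)H_{\sigma(t)}\,dt.$$
Do not apply the Gray modification: we want $\ker d\alpha_1$ to be the foliation spanned by $\p_t+\sigma'X_{H_{\sigma(t)}}$. This form equals $\alpha_\std$ near $t=0,\tau$ and is contact once $|\sigma'|$ is small. Its Reeb flow from $\DD^2\times\{0\}$ to $\DD^2\times\{\tau\}$ is $\psi$.

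Next restrict to the tube $\bigcup_t \psi_{\sigma(t)}(D_e^-)\times\{t\}$ swept out by $D_e^-$. Reparametrize it as $D_e\times[0,\tau]$ by $(z,t)\mapsto(\psi_{\sigma(t)}(z),t)$, then shift $t$ so the interval becomes $\DD^1_{\tau}$. Finally prepend and append collars on which the form is exactly $\alpha_\std$, shifted in $t$, so that condition (i) holds in the normalization of $\Psi_e^\pm$. The pulled-back $1$-form $\alpha_e$ has $\ker d\alpha_e$ equal to the vertical foliation $\FF_e=\FF_\std$ near the ends, with holonomy $h_e$ satisfying $\psi_e^+\circ h_e\circ(\psi_e^-)^{-1}=\phi_e$. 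This gives an isomorphic dynamical handlebody together with (i) and (ii).

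\emph{Step 3: tightness and (iii).} For tightness, note that $\alpha_1$ is isotopic through the contact forms $\alpha_s$ to $\alpha_\std$ on $\DD^2\times[0,\tau]$. By Gray's theorem the kernel of $\alpha_1$ is diffeomorphic to $\xi_\std$ restricted to a compact region of $\R^3$, which is tight. The handle embeds into this region, so $\alpha_e$ is tight. For (iii), near $\p^\pm H_e$ the form is $\alpha_\std$ in translated coordinates and $\p D_e^\pm$ is a closed curve in the plane. The argument of Example~\ref{ex:char-fol} adapts: the characteristic foliation on $\p D\times I$ has $dt=\lambda_\std$ along $\p D$, whose integral over $\p D$ is $-\mathrm{area}(D)\neq0$. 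Hence the foliation spirals, and a perturbation of a horizontal loop is transverse to it. I need to double check that the collars are long enough in $t$ (longer than the area of $D_e$) for such a loop to fit; if not, lengthen the collars. The same computation shows $\xi$ is transverse to $\p^vH_e$ wherever $\lambda_\std|_{\p D}$ does not vanish identically, but only the loops are required.

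\emph{Main obstacle.} I expect the main obstacle to be Step~2's bookkeeping. The domain is not a product in the $\alpha_1$-coordinates, and the contact condition for $\alpha_1$ must hold, which requires $|\sigma'H|$ and $|\sigma'\,dH\wedge\lambda|$ to be small on $\DD^2$. This is handled by choosing $\tau$ large, since $\|H_t\|_{C^1}$ is fixed. One must also confirm that the collars make the $t$-normalization of $\Psi_e^\pm$ consistent. All of this is achieved by choosing $\tau$ large, independently for each edge.
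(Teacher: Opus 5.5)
Your proof is correct, but it reaches the contact form by a different and longer route than the paper. The paper first uses Remark~\ref{rem:dyn-handlebody}(c) to push all of the holonomy into the gluing maps, so that $H_e=\DD^2\times\DD^1_\tau$, $\FF_e=\FF_\std$ and $h_e=\Id$. It then observes that $\lambda^\pm=(\psi_e^\pm)^*\lambda_\std$ are two primitives of the same area form, hence $\lambda^\pm=\lambda+df^\pm$ because $H^1(\DD^2)=0$. Finally it takes $\alpha=\lambda+df+dt$ for an arbitrary interpolation $f$, with $\tau$ large so that $1+\dot f_t>0$; since $d\alpha=\om$, the Reeb field is a positive multiple of $\p_t$.

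You instead realize $\phi_e$ dynamically, much as in the paper's alternative construction. You extend $\phi_e$ to $\psi\in\Symp_c(\DD^2)$ and write $\psi$ as a Hamiltonian time-one map, which costs Smale's theorem and Moser through Lemma~\ref{lem:disk-maps}(b),(c). You then suspend it by the explicit form $\alpha_1$ of Lemma~\ref{lem:contact-disk-maps}, rightly omitting the Gray step, and straighten the swept tube by $\Xi(z,t)=(\psi_{\sigma(t)}(z),t)$. A short computation shows that after straightening you land exactly on the paper's ansatz:
\[
\Xi^*\alpha_1=\lambda_\std+dF+dt,\qquad F(z,t)=\int_0^{\sigma(t)}\bigl(\lambda_\std(X_{H_r})-H_r\bigr)\circ\psi_r(z)\,dr .
\]
This lives on a handle whose gluing map $\psi_e^-$ is the inclusion of $D_e^-$ and $\psi_e^+=\phi_e$. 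So the extension and isotopy machinery only manufactures one particular interpolating function, which the paper gets for free.

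What your route does buy is an explicit tightness argument, via Gray applied to the family $\alpha_s$. The paper's proof leaves tightness implicit. It holds there too, since $(z,t)\mapsto\bigl(\psi_e^-(z),\,t+f(z,t)-f^-(z)\bigr)$ is a strict contact embedding into $(\R^3,\alpha_\std)$.

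A few small corrections. An isomorphism does change $\psi_e^\pm$ (by $g_e^\pm$). What you actually use is that the data $(D_e^\pm,\phi_e)$ determine $W$ up to isomorphism, and that Reeb structures pull back along isomorphisms.

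The enlarging or shrinking of $D_e^\pm$ in Step~1 is unnecessary, since Lemma~\ref{lem:disk-maps}(a) applies directly; shrinking would even change the data. The extra collars in Step~2 are also unnecessary, because the form is already $(\Psi_e^\pm)^*\alpha_\std$ wherever $\sigma$ is constant.

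Finally, along $\p D$ the characteristic direction satisfies $dt=-\lambda_\std$, not $+\lambda_\std$. For a general boundary curve the transverse loop need not be a small perturbation of a horizontal one. So lengthening the region where $\sigma$ is constant is indeed what condition (iii) requires.
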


\begin{proof}
Note first that if $G:W\to\wt W$ is an isomorphism of dynamical
handlebodies and $(\wt W,\wt\alpha)$ is a Reeb handlebody, then so is
$(W,G^*\wt\alpha)$. According to Remark~\ref{rem:dyn-handlebody}(c), 
we may therefore assume that each $1$-handle has the form
$H_e=\DD^2\times\DD^1_\tau$, for any given $\tau>0$, with $\FF_e=\FF_\std$ and
$\om_e^-=\om_e^+=:\om_e$. 
We need to construct on $H_e$ a contact form $\alpha_e$ satisfying
conditions (i)--(iii) in Definition~\ref{def:Reeb-handlebody}, with
$\FF_e=\FF_\std$ and given maps $\psi_e^\pm$.  
To simplify notation, we will drop the subscript $e$.

Pick a $1$-form $\lambda$ on $\DD^2$ with $d\lambda=\om$. 
Since $(\psi^\pm)^*\om_\std=\om$, the $1$-forms 
$\lambda^\pm:=(\psi^\pm)^*\lambda_\std$ satisfy
$d\lambda^\pm=\om=d\lambda$. Since $H^1(\DD^2)=0$,
there exist smooth functions $f^\pm:\DD^2\to\R$ such that
$\lambda^\pm=\lambda+df^\pm$.  
Pick a smooth function $f:\DD^2\times\DD^1_\tau\to\R$ which
agrees with $f^\pm$ near $\DD^2\times\{\pm\tau\}$ and set 
$$
  \alpha := \lambda + df + dt = \lambda + df_t + (1+\dot f_t)dt,
$$
where we write $f_t=f(\cdot,t)$ and $\dot f_t$ denotes its derivative
in the $t$-direction. By choosing $\tau$ sufficiently large 
we can make $|\dot f_t|$ so small that $1+\dot f_t>0$. 
Then $d\alpha=\om$ and $\alpha\wedge d\alpha=(1+\dot
f_t)\om\wedge dt>0$, so $\alpha$ is a positive contact form
with Reeb vector field $(1+\dot f_t)^{-1}\p_t$. 
By construction we have $\alpha=\lambda^\pm+dt=(\Psi^\pm)^*\alpha_\std$
on neighbourhoods of $\DD^2\times\{\pm\tau\}$.
By Example~\ref{ex:char-fol}, we can make these neighbourhoods
large enough so that they contain transverse loops for the
characteristic foliation on $\p^vH=\p\DD^2\times\DD^1_\tau$.
\end{proof}

{\bf An alternative construction.}
Here we present an alternative proof of Lemma~\ref{lem:Reeb-handlebody}.
The idea is to realize the nontrivial dynamics arising from the
attaching maps within extended $0$-handles with the standard contact
structure (not contact form!), so that the extended $0$-handles are
connected by $1$-handles whose attaching maps are affine
transformations between ellipses. This will facilitate the embedding
into a given contact manifold in the following subsection. 

Consider an abstract dynamical handlebody $W_\abstr$.
For notational convenience, we will describe the construction under
the simplifying assumption that all the disks $D_e^\pm$ have the same
area $\pi\rho^2$.
It will be clear from the construction how to adjust it without this
assumption. 

Let $K^\pm$ be the maximal number of outgoing/incoming edges at
vertices of the underlying graph and set $K:=\max\{K^+,K^-\}$.
Pick $a,b>0$ with $ab=\rho^2$ and $R\geq 1$ such that the disk $\DD^2_R$
contains $K$ disjoint translates in the $x$-direction of the ellipsoid
$$
  E = \{(x,y)\mid x^2/a^2+y^2/b^2\leq 1\} \subset\DD^2_R.
$$

\begin{rem}
For the abstract dynamical handlebody associated to a Turing machine
in~\S\ref{ss:Turing-dyn-handlebody}, the assumption above is satisfied
and we have $K=2$. Moreover, according to Remark~\ref{rem:ellipse}, 
we can take $R=1$ and $a=b/3$ with $1/2<b^2<8/9$. 
\end{rem}

By the assumption above and Remark~\ref{rem:dyn-handlebody}(c), 
we can realize $W_\abstr$ by a dynamical handlebody $W$ for which each
$1$-handle has the form $H_e=\DD^2_\rho\times\DD^1$ with $\FF_e=\FF_\std$
and $\om_e^-=\om_e^+=\om_\std$.
Define the diffeomorphism
$$
  \theta: \DD^2_\rho\stackrel{\cong}\longrightarrow E,\quad (x,y)\mapsto
  (ax/\rho,by/\rho),\qquad \theta^*\lambda_\std = \lambda_\std.
$$
Consider a $0$-handle $B_q=\DD^2\times\DD^1$. Let $e_1,\dots,e_k$,
$k\leq K$ be the outgoing edges at the vertex $q$, and
$\psi_{e_i}^-:\DD_\rho\into \DD$ the corresponding attaching maps
(which by assumption have disjoint images $D_{e_i}^-$). 
Pick disjoint translates $E_i^-=E+(x_i,0)\subset\DD^2_R$, $i=1,\dots,k$ of the
ellipsoid $E$ and define
$$
  \theta_i^-: \DD^2_\rho\stackrel{\cong}\longrightarrow E_i^-,\quad (x,y)\mapsto
  \theta(x,y)+(x_i,0),\qquad (\theta_i^-)^*\lambda_\std = \lambda_\std.
$$
By Lemma~\ref{lem:disk-maps}(a), there exists $\psi_q^-\in\Symp_c(\DD^2_R)$ with
$$
  \psi_q^-|_{E_i^-}=\psi_{e_i}^-\circ(\theta_i^-)^{-1}:E_i^-\stackrel{\cong}\longrightarrow
  D_{e_i}^-\subset\DD^2_R,\qquad i=1,\dots,k.
$$
See Figure \ref{fig:gluing-alternative} for a graphical representation of these maps.

\begin{figure}[h]
\centering
\includegraphics[width=0.6\linewidth]{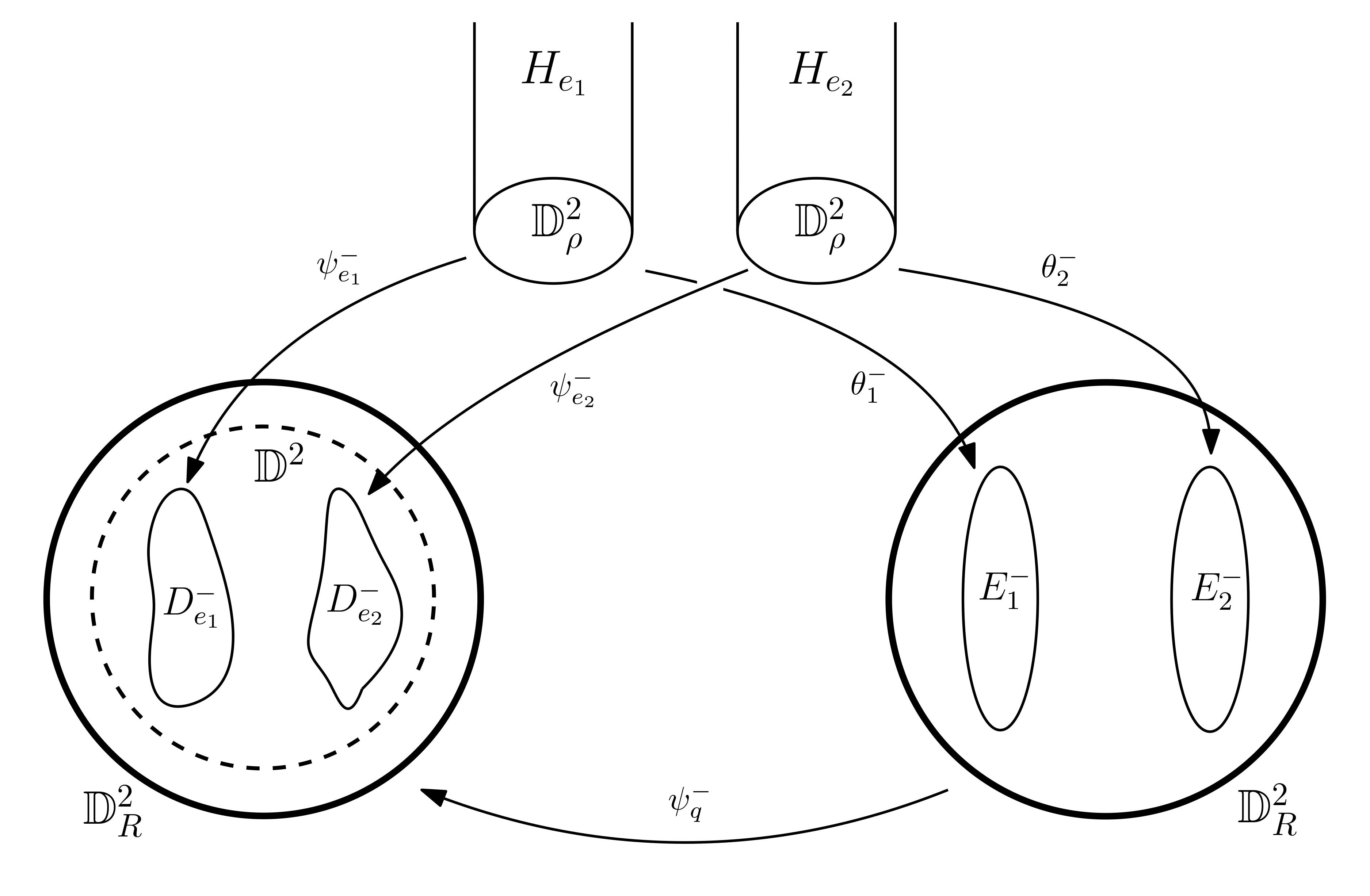}
\caption{Gluing of the $1$-handles in the alternative construction}
\label{fig:gluing-alternative}
\end{figure}

By Lemma~\ref{lem:contact-disk-maps} (applied to $\alpha_\std$ and
setting $\alpha_q:=\alpha_1$), for
$\tau>1$ sufficiently large there exists a contact form $\alpha_q$ on
$\DD^2_R\times[-1,\tau]$ with the following properties (see Figure \ref{fig:Reeb-alternative-construction}):
\begin{itemize}
\item $\ker\alpha_q=\xi_\std$;
\item $\alpha_q=\alpha_\std$ near
  $\DD^2_R\times[-1,1]\cup\p\DD^2_R\times[-1,\tau]$;
\item the Reeb orbit for $\alpha_q$ starting at $(z,1)$ intersects
  $\DD^2_R\times\{\tau\}$ at the point $\bigl((\psi_q^-)^{-1}(z),\tau\bigr)$.
\end{itemize}
We perform the analogous construction for the incoming edges at $q$
to extend $\alpha_q$ to a contact form, still denoted $\alpha_q$, on
$\wh B_q:=\DD^2_R\times[-\tau,\tau]$. We glue the $1$-handles
$H_{e_i}=\DD^2_\rho\times\DD^1$ of the outgoing edges to $\wh 
B_q$ by the maps $\theta_i^-$, and similarly for the incoming edges.
We perform this for all $0$-handles and denote the resulting space by
$\wh W$. It carries a contact form $\wh\alpha$ given by $\alpha_q$ on
each extended $0$-handle $\wh B_q$ and by $\alpha_\std$ on each $1$-handle. 

\begin{figure}[h]
\centering
\includegraphics[width=0.7\linewidth]{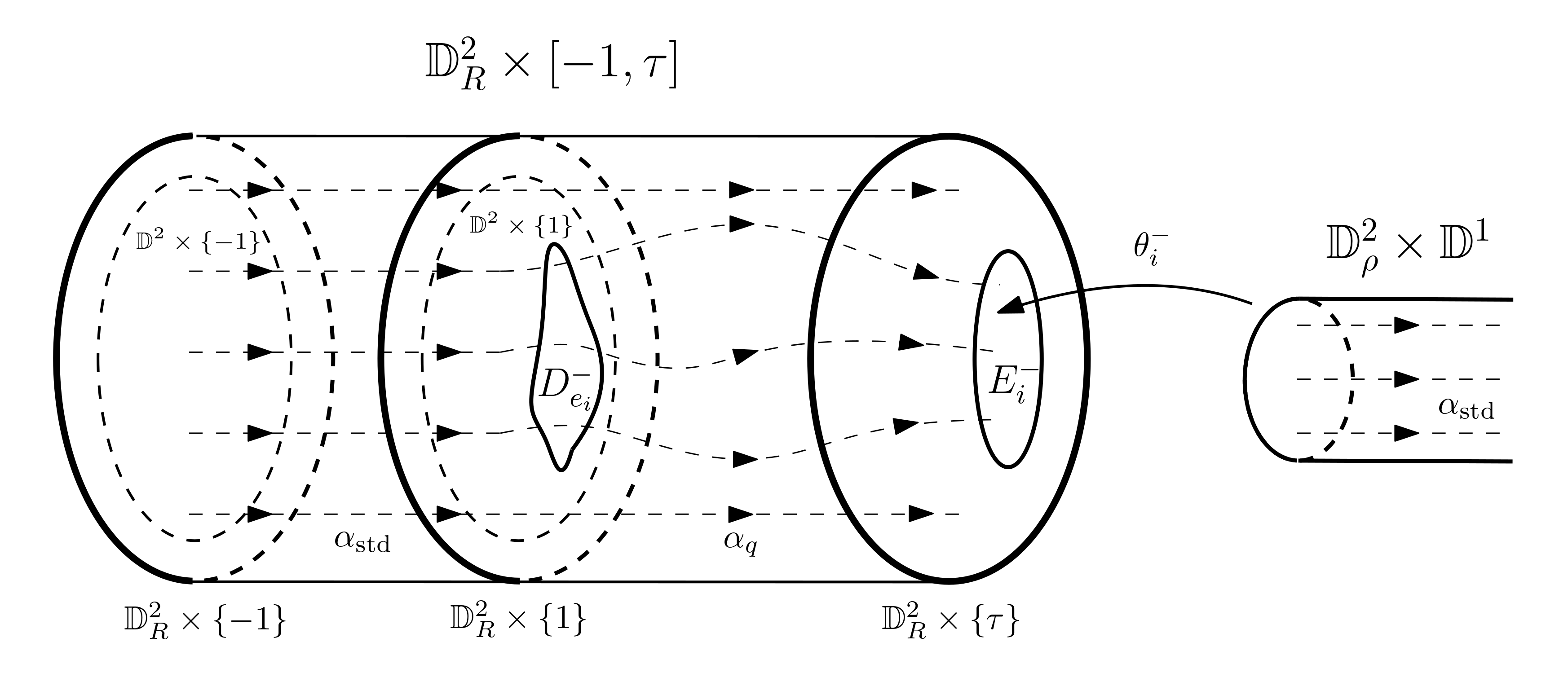}
\caption{Reeb flow in the alternative construction}
\label{fig:Reeb-alternative-construction}
\end{figure}

Note that $(\wh W,\wh\alpha)$ is not a contact handlebody in our sense
because the contact form is not $\alpha_\std$ on the extended $0$-handles $\wh B_q$.
However, it contains a contact handlebody $(\wt W,\wt\alpha)$ defined
as follows. The $0$-handles are $\wt B_q=\DD^2\times[-1,1]$ with the
contact form $\alpha_\std$. For an edge $e$ from $q$ to $q'$, the
$1$-handle $\wt H_e=\DD^2_\rho\times\DD^1_\tau$ is obtained from $\DD^2_\rho\times\DD^1$ by
gluing its negative end via the corresponding map $\theta_i^-$ to the cylinder in
$\DD^2_R\times[1,\tau]\subset\wh B_q$ swept out by $\theta_i^-(\DD^2_\rho)=E_i^-$
under the backward Reeb flow of $\alpha_q$, and similarly at the
positive end (see Figure \ref{fig:subhandlebody}). The $1$-handles $\wt H_e$ are equipped with the
restriction of the contact form $\wh\alpha$, and they are attached to
the $0$-handles by the obvious inclusions. 
By Example~\ref{ex:char-fol}, for $\tau$ large we can arrange condition (iii) of Definition \ref{def:Reeb-handlebody}.
It is clear from the construction that the dynamical handlebody
underlying $(\wt W,\wt\alpha)$ is isomorphic to $W$.

\begin{figure}[h]
\centering
\includegraphics[width=0.7\linewidth]{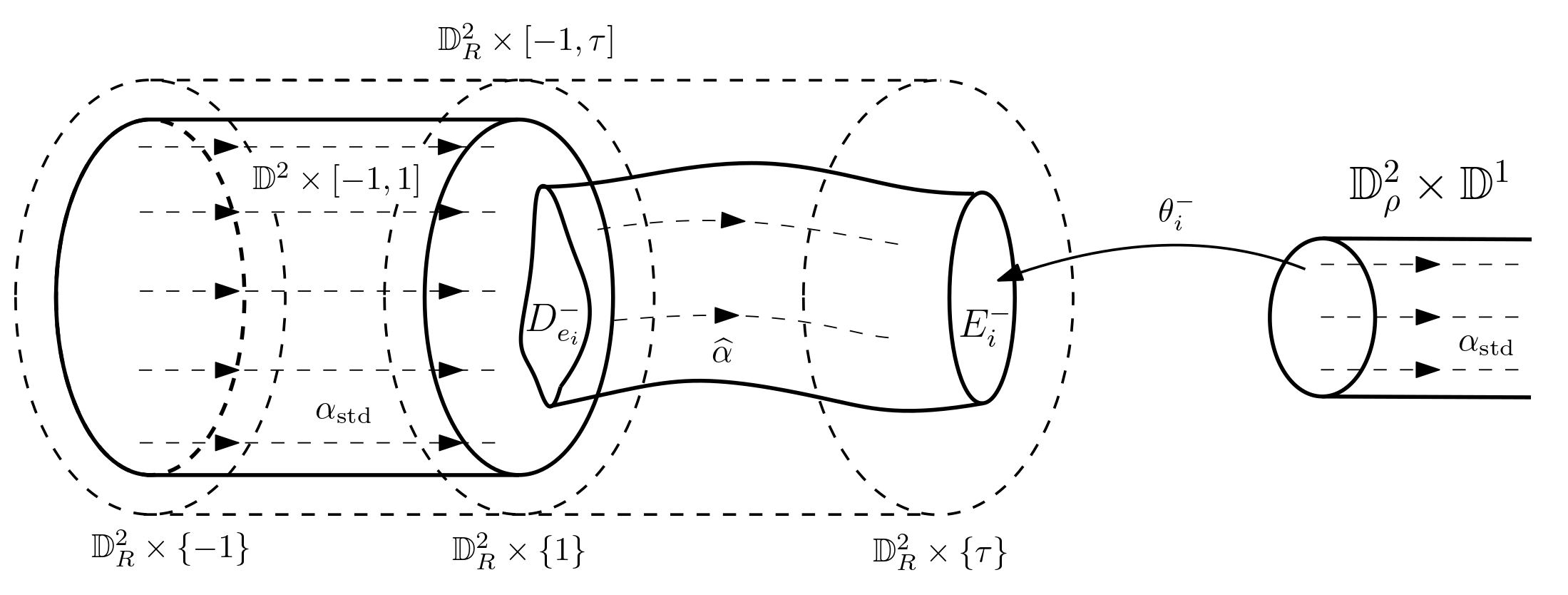}
\caption{Subhandlebody $(\wt W, \wt \alpha)$ inside $(\wh W, \wh \alpha)$}
\label{fig:subhandlebody}
\end{figure}

\subsection{Embedding a Reeb handlebody into a contact manifold}\label{ss:emb-Reeb}

The following lemma shows that each Reeb handlebody can be realized
inside an arbitrary given contact $3$-manifold.

\begin{lem}\label{lem:Reeb-handlebody-emb}
Let $(W,\alpha)$ be a Reeb handlebody, and $(M,\xi)$ any cooriented contact
$3$-manifold. Then every smooth embedding $\phi_0:W\into M$ is 
isotopic to an embedding $\phi:W\into M$ such that
$\phi_*\ker\alpha=\xi$. 
\end{lem}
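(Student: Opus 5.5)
We plan to build the isotopy handle by handle: first handle the $0$-handles using Darboux charts, then handle the $1$-handles using the relative uniqueness result of Lemma~\ref{lem:contact-tubes}, or rather a version of it in which the target is a contact ball in $M$. Since $W$ is the union of the $0$-handles $B_q$ and the $1$-handles $H_e$, and $\ker\alpha=\xi_\std$ on each $B_q$, the steps are as follows.

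\emph{Step 1 ($0$-handles).} Pick distinct points $p_q=\phi_0(0,0)\in M$, one for each vertex $q$. Choose Darboux charts for the contact \emph{structure} around them: for any $R$ there are embeddings $\psi_q:\Int\DD^3_R\to M$ with $\psi_q^*\xi=\xi_\std$ (recall the remark after Darboux's theorem, which uses the rescaling $(x,y,t)\mapsto(cx,cy,c^2t)$). Each $B_q$ has size $O(1)$ in its coordinates, so it sits inside $\DD^3_R$, and the chart images can be taken pairwise disjoint. The isotopy then proceeds in two moves. First, isotope $\phi_0$ near each $B_q$ until $\phi_0|_{B_q}$ is a shrunken copy of $\psi_q|_{B_q}$, composed with a small rescaling; the conformal rescaling preserves $\xi_\std$. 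Since every embedding of a ball is isotopic to a standard one (after fixing orientation, which is guaranteed because $\xi$ is cooriented and $M$ therefore carries the contact orientation), this is possible. We may assume that $\phi_0$ is orientation preserving; otherwise we first compose with a reflection, although this needs some care. Second, extend the result to an isotopy of all of $W$ by the isotopy extension theorem. After this step, $\phi_*\ker\alpha=\xi$ holds on the $0$-handles and, by condition (i) of Definition~\ref{def:Reeb-handlebody}, also on collars of $\p^\pm H_e$, where $\alpha_e$ is a pullback of $\alpha_\std$.

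\emph{Step 2 ($1$-handles).} Fix an edge $e$. The current embedding $\phi|_{H_e}$ is a smooth embedded $1$-handle whose ends already agree contactly with $\xi$. We want to isotope it rel the collars of $\p^\pm H_e$ to an embedding along which $\ker\alpha_e$ is carried to $\xi$. We first choose a core arc from $\phi(\p^-H_e)$ to $\phi(\p^+H_e)$ that is $C^0$-close to the core of $\phi(H_e)$ and that is, moreover, Legendrian-transverse, namely a positively transverse curve to $\xi$. Such an arc exists and is isotopic rel endpoints to the original core, by the $h$-principle for transverse curves, i.e.\ by adding small positive twists. The standard neighbourhood theorem for transverse arcs then gives a thin tube $U$ around this arc that is contactomorphic to a neighbourhood of $\{0\}\times[-\tau,\tau]$ in $(\R^3,\xi_\std)$. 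Using the flexibility in the choice of $\tau$ and the rescaling trick, we can make $U$ fat enough to contain an embedded copy of $(H_e,\ker\alpha_e)$. For this we note that $\ker\alpha_e$ is tight, so after the rescaling it embeds in $(\R^3,\xi_\std)$ near the standard tube. The precise embedding comes from applying Lemma~\ref{lem:contact-tubes} to the two tight structures $\ker\alpha_e$ and $\psi^*\xi_\std$ on $H_e$, which agree near $\p^\pm H_e$ and both satisfy the transverse-loop condition (iii) by Example~\ref{ex:char-fol}. This yields a diffeomorphism $\phi_e$ of $H_e$, fixed near the ends, with $\phi_e^*\xi=\ker\alpha_e$. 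Composing with $\phi_e$ does not change the isotopy class rel the ends, because it is supported in the interior of a ball-like handle and is isotopic to the identity rel boundary by Cerf's theorem $\pi_0\Diff(\DD^3,\p)=0$. The handles for different edges are disjoint, so we can carry this out for each edge independently.

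\emph{Main obstacle.} We expect the hardest part to be Step 2. The difficulty is ensuring that the model tube in $M$ matches the already-fixed ends exactly, including the framing and twisting of the attaching disks, so that Lemma~\ref{lem:contact-tubes} applies rel both ends simultaneously and the resulting structure on the handle is tight. The first thing we would try is to choose the transverse arc with a suitable self-linking/twisting correction, adding stabilizations so that the boundary characteristic foliation on $\p^vH_e$ has the correct transverse loops near both ends, and then to invoke Eliashberg's uniqueness theorem via Lemma~\ref{lem:contact-tubes}.
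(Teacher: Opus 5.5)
Your skeleton (Darboux balls for the $0$-handles, transverse arcs plus Lemma~\ref{lem:contact-tubes} for the $1$-handles) matches the paper's. However, there is a genuine gap in Step~2.

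To invoke Lemma~\ref{lem:contact-tubes} on $H_e$ you need a reference embedding $\psi:H_e\into M$ with three properties:
(a) on the collars of property~(i) in Definition~\ref{def:Reeb-handlebody}, it agrees with the maps fixed in Step~1. Near $\p^-H_e$ this means it is the vertical cylinder over the attaching disk $D_e^-$ in the Darboux chart of $B_q$, which is an arbitrary disk of fixed size in those coordinates; similarly near $\p^+H_e$.
(b) $\psi(\p^vH_e)$ is transverse to $\xi$.
(c) The image lies in a tight region, so that $\psi^*\xi$ is tight.

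You never construct such a $\psi$. The sentences ``we can make $U$ fat enough to contain an embedded copy of $(H_e,\ker\alpha_e)$'' and ``$\ker\alpha_e$ is tight, so after the rescaling it embeds in $(\R^3,\xi_\std)$'' assume what is to be proved. Tightness yields at best \emph{some} contact embedding of the ball, not one that restricts to the prescribed collars and lands in $U$. Rescaling fattens $U$ in its own coordinates, but it does not make those coordinates match the Darboux coordinates of $B_q$ at the ends. Moreover, because your Darboux balls are chosen before $U$, nothing ensures that collars and tube together lie in a tight set.

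The remedy you propose under ``main obstacle'' aims at the wrong issue. Once (a) holds, transverse loops near the ends come for free from condition~(iii). Any twisting of the characteristic foliation along $\p^vH_e$ is absorbed by the diffeomorphism of Lemma~\ref{lem:contact-tubes}, so no stabilizations are needed.

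The missing idea is to standardize the ends before embedding. The paper first uses Corollary~\ref{cor:contact-tubes} to replace $(W,\alpha)$ by the model $(\wt W,\wt\alpha)\subset(\wh W,\wh\alpha)$ of the alternative construction in~\S\ref{ss:dyn-Reeb}. There, via Lemma~\ref{lem:contact-disk-maps}, the arbitrary attaching symplectomorphisms are absorbed into extended $0$-handles $\wh B_q=\DD^2_R\times[-\tau,\tau]$ whose contact \emph{structure} is still $\xi_\std$. As a result, every $1$-handle attaches along an $x$-translate $E_e^\pm$ of one fixed ellipse, by a map $\theta_e^\pm$ with $(\theta_e^\pm)^*\lambda_\std=\lambda_\std$.

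The paper also orders the choices the other way round from yours:
first make the edges $\delta_e$ of the embedded graph transverse;
then take tight standard neighbourhoods $U_e$;
then put the large Darboux charts inside $U_q=\bigcap_{e\ni q}U_e$;
only then thicken transverse arcs $\gamma_e\subset U_e$, vertical in the charts, to tubes that near the ends are vertical cylinders over $E_e^\pm$.
With this setup, (a)--(c) are easy: vertical cylinders are transverse to $\xi_\std$, and everything lies in the tight set $U_e$. Lemma~\ref{lem:contact-tubes} then finishes.

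Your route could be repaired differently, for example by choosing the Darboux balls inside tight neighbourhoods of transverse edges and deforming the cross-sections $D_e^\pm$ to small round disks slowly in $t$ inside tall rescaled Darboux charts, which keeps the boundary transverse to $\xi_\std$. Some such argument is indispensable, though.

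Two smaller points:
\begin{itemize}
\item Cerf's theorem does not apply. The diffeomorphism from Lemma~\ref{lem:contact-tubes} is the identity only near $\p^\pm H_e$, and it may act as a Dehn twist on the annulus $\p^vH_e$, changing the framing of the handle. This affects only the isotopy-class claim, which the paper also controls only through the core graph (Remark~\ref{rem:Reeb-handlebody-emb}(a)).
\item Composing with a reflection is not an isotopy. Any embedding pulling $\xi$ back to $\ker\alpha$ preserves contact orientations, so the lemma must be read for orientation-preserving $\phi_0$.
\end{itemize}
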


\begin{rem}\label{rem:Reeb-handlebody-emb}
(a) The dynamical handlebody $W$ retracts onto an embedding of its underlying
graph $G$ such that the vertices are the centers of the corresponding
$0$-handles and the edges run through the corresponding $1$-handles. 
This induces a one-to-one correspondence between isotopy classes of
embeddings $W\into M$ and isotopy classes of embeddings $G\into M$.
Thus Lemma~\ref{lem:Reeb-handlebody-emb} asserts the existence of a
contact embedding $\phi$ realizing a prescribed embedding of the graph
$G$, up to isotopy that can be fixed at the vertices.
Note that we can embed the graph $G$ into any $3$-manifold with
  arbitrary prescribed isotopy classes of the edges.

(b) We can extend the pushforward $\phi_*\alpha$ on $\phi(W)$ to a
defining contact form $\beta$ for $\xi$ on $M$, so that
$\phi^*\beta=\alpha$.
\end{rem}

\begin{proof}
By Corollary~\ref{cor:contact-tubes}, we may assume that $(W,\alpha)$ is given by $(\wt W,\wt\alpha)\subset(\wh W,\wh\alpha)$, as
obtained from the alternative construction in~\S\ref{ss:dyn-Reeb},
with constants $R\geq 1$ and $\tau>1$.

According to Remark~\ref{rem:Reeb-handlebody-emb}(a), the embedding
$\phi_0$ yields an embedding of the underlying graph $G$ into $M$,
consisting of distinct points $a_q$ for the vertices $q\in Q$ and an
embedded path $\delta_e$ from $a_q$ to $a_{q'}$ for each edge $e$ from
$q$ to $q'$. After an isotopy, we may assume that each $\delta_e$ is
transverse to $\xi$.   
By the contact neighbourhood theorem (see e.g.~\cite[Theorem
  2.5.15]{geiges2008introduction}), an open neighbourhood $(U_e,\xi)$
of $\delta_e$ is contactomorphic to a neighbourhood of a vertical path
in $(\R^3,\xi_\std)$, in particular it is tight. 

For each vertex $q$, let $U_q$ be the intersection of all the sets
$U_e$ for edges $e$ adjacent to $q$. Take a Darboux chart
$(\wt B_q:=\DD^2_R\times[-\tau-1,\tau+1],\xi_\std)\subset (U_q,\xi)$. 
For each edge $e$ from $q$ to $q'$ consider the images
$E_e^\pm=\theta_e^\pm(\DD^2_\rho)\subset\DD^2_R$ under the attaching maps,
which are ellipsoids with center $z_e^\pm=(x_e^\pm,0)$. 
Pick a transverse path $\gamma_e\subset U_e$ from $(z_e^-,\tau)\in\wt B_q$ to
$(z_e^+,-\tau)\in\wt B_{q'}$ such that
$\gamma_e\cap\wt B_q = \{z_e^-\}\times[\tau,\tau+1]$ and   
$\gamma_e\cap\wt B_{q'} = \{z_e^+\}\times[-\tau-1,-\tau]$.
Moreover, we can arrange that the composition of $\gamma_e$ with
  the straight lines to $a_q,a_q'$ in $\wt B_q,\wt B_{q'}$ is smoothly
  isotopic to $\delta_e$.
Thicken $\gamma_e$ to an embedding
$\phi_e:H_e=\DD^2_\rho\times\DD^1\into U_e$ whose restriction to the
vertical boundary $\p^vH_e$ is transverse to $\xi$, and such that
$\phi_e(z,t)=(\theta_e^-(z),t+\tau+1)$ near $t=-1$ and 
$\phi_e(z,t)=(\theta_e^+(z),t-\tau-1)$ near $t=1$, in the Darboux charts. 
We perform this for all edges $e$ such that the images of the $\phi_e$ 
are disjoint. 
The maps $\phi_e$ together with the inclusions $\wh
B_q:=\DD^2_R\times[-\tau,\tau]\into U_q$ define an embedding
$\wh\phi:\wh W\into M$. 
By construction, the pullback $\wh\phi^*\xi$ agrees with $\xi_\std$ on
the $0$-handles $\wh B_q$ and near $\p^\pm H_e$ for all $1$-handles.   
Using Lemma~\ref{lem:contact-tubes}, we can therefore adjust $\wh\phi$
on the $1$-handles so that $\wh\phi^*\xi=\xi_\std$ everywhere. 
The restriction of $\wh\phi$ to $\wt W\subset\wh W$ is the desired
embedding $\phi:W=\wt W\into M$ with $\phi_*\ker\alpha=\xi$. 
Since the embedding $\phi:W\into M$ realizes the given isotopy class
of the embedding $G\into M$, it is isotopic to $\phi_0$ by
Remark~\ref{rem:Reeb-handlebody-emb}(a).
\end{proof}

\subsection{Undecidable recurrence and periodic Reeb orbits}

We finish by outlining two immediate consequences of the computable return-map
construction of Theorem \ref{thm:main}.

The first application concerns recurrence to a fixed transverse
section and its decidability. Recall that a set $A \subset \NN$ is said to be \emph{decidable} (also known as \emph{recursive}) if there exists a computable total function $f: \NN \to \NN$ such that $f(n) = 1$ if $n \in A$ and $f(n) = 0$ if $n \not\in A$. Otherwise, the set $A$ is said to be \emph{undecidable}.

\begin{cor}\label{cor:undecidable-recurrence}
Let $(M,\xi)$ be a coorientable contact $3$-manifold. There exists a
contact form $\beta \in \Omega^1(M)$ with $\xi = \ker\beta$ and a disk
$D\subset M$ transverse to the Reeb flow of $\beta$ such that the set
$$
    A = \left\{x \in \NN \subset D \,\mid\, \text{the forward Reeb orbit of $x$ returns to $D$}\right\}
$$
is undecidable.
\end{cor}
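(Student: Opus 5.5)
Apply Theorem~\ref{thm:main} to a computable function whose domain of definition is undecidable. Recall that the halting set $K=\{n\in\N \mid \text{the $n$-th Turing machine halts on input } n\}$ is recursively enumerable but not decidable. Define $f:\N\dashrightarrow\N$ by $f(n)=0$ if $n\in K$, with $f(n)$ undefined otherwise. This $f$ is computable: a universal machine simulates machine $n$ on input $n$, and upon halting erases the tape and writes $0$. Its domain is $D_f=K$.

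Theorem~\ref{thm:main} then gives a contact form $\beta$ with $\ker\beta=\xi$ and a disk $D\subset M$ transverse to the Reeb flow whose first return map $\Phi$ satisfies $\Phi|_\N=f$. The key point is that the equality $\Phi|_\N=f$ is an equality of \emph{partial} functions, including domains. By the construction of \S\ref{ss:Turing-dyn-handlebody} and \S\ref{ss:emb-Reeb}, the Reeb orbit of a point $n\in\N\subset D$ returns to $D=\phi(D_0)$ exactly when the orbit in the dynamical handlebody passes from $B_{q_0}$ back through the half-handle of $q_f$. In view of~\eqref{eq:phi-s}, this happens exactly when the (reversible) machine reaches $q_f$, that is, when $n\in D_f$. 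Hence
$$
A=\{n\in\N\mid \Phi(n)\text{ is defined}\}=D_f=K .
$$

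The one point to check carefully is that a non-halting orbit cannot return to $D$ some other way, for instance by leaving $W$ and re-entering $D$ from outside. Inside $\phi(W)$ this cannot happen, since the Reeb field generates the foliation $\FF$ and $D_0$ meets only the orbits passing through $B_{q_0}$. A non-halting computation visits only the finitely many $0$- and $1$-handles and never reaches $q_f$. Its orbit therefore stays in $\phi(W)$ forever and never returns to $D$. This is exactly the statement that $\Phi|_\N=f$ holds as partial functions, as asserted in Theorem~\ref{thm:main}.

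Since $A=K$ is undecidable, the corollary follows.
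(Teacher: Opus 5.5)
Your proposal is correct and follows the paper's proof. You choose a computable partial function with undecidable domain (you use the diagonal halting set $K$, the paper uses the halting function of a universal machine), apply Theorem~\ref{thm:main}, and observe that $A$ is exactly the domain of $\Phi|_\N=f$. Your extra check, that a non-halting orbit stays inside $\phi(W)$ and never re-enters $D$, correctly unpacks what the partial-function equality in Theorem~\ref{thm:main} already asserts; the paper simply takes that equality as given.
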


\begin{proof}
Choose a partial computable function
$f:\mathbb N\dashrightarrow \mathbb N$ with undecidable domain, for
instance the halting function of a universal Turing machine.  By
Theorem~\ref{thm:main}, the contact form and section $D \subset M$ can be chosen so
that the first return map $\Phi:D\dashrightarrow D$ realizes $f$ on
the encoded copy of $\mathbb N \subset D$.  Thus $\Phi(x)$ is defined precisely
for those inputs $x$ belonging to the domain of $f$.  Deciding return
to $D$ would therefore decide the domain of $f$, a contradiction.
\end{proof}

The Reeb realization of generalized shifts also provides a direct source of
infinitely many simple periodic Reeb orbits. It is enough to realize the elementary generalized shift given by the Bernoulli shift.

Recall that the (right) shift map, also known as the \emph{Bernoulli shift}, is the map
\[
  s_+: \{0,1\}^\ZZ\longrightarrow \{0,1\}^\ZZ,
  \qquad
  s_+((t_i)_{i\in\mathbb Z})=(t_{i+1})_{i\in\mathbb Z}.
\]
Notice that this map is an extension to non-finite sequences of the shift map $s_+: \Lambda \to \Lambda$ used in Section \ref{ss:basic-ops}. In particular, since the embedding $\kappa_\Lambda: \Lambda \hookrightarrow C^2$ can be extended to a homeomorphism $\kappa: \{0,1\}^{\mathbb Z} \stackrel{\cong}{\to} C^2$, the shift map also gives rise to a Bernoulli shift on the square Cantor set, also denoted $s_+: C^2 \to C^2$.

\begin{cor}\label{cor:infinitely-many-reeb-orbits}
Let $(M,\xi)$ be a coorientable contact $3$-manifold. Then there exists
a contact form $\beta$ with $\xi = \ker\beta$ whose Reeb flow has infinitely many distinct closed orbits. More precisely, $\beta$ may be
chosen so that, for a transverse disk $D$, the first return map contains an
invariant square Cantor subset on which the dynamics is conjugate to the Bernoulli shift.
\end{cor}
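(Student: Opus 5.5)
We realize the Bernoulli shift by a dynamical handlebody with a single vertex and a single self-loop, and then apply the embedding machinery already developed. Let $G$ be the graph with one vertex $q_0$ and one edge $e$ from $q_0$ to itself. We choose $D_e^-\subset\Int\DD^2$ to be a disk containing both $D_0$ and $D_1$; for instance, a slightly shrunken disk containing the square $[-1/2,1/2]^2$ but avoiding the support boundary of $\phi_+$. We set $\phi_e:=\phi_+|_{D_e^-}$ and $D_e^+:=\phi_+(D_e^-)$. Since $\phi_+$ is area preserving, this defines an abstract dynamical handlebody; the disjointness condition is vacuous because there is only one edge. By the lemma following Definition~\ref{def:abs-dyn-handlebody} it comes from a dynamical handlebody $W$. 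By Lemma~\ref{lem:Reeb-handlebody} it carries a Reeb handlebody structure $(W,\alpha)$. By Lemma~\ref{lem:Reeb-handlebody-emb} together with Remark~\ref{rem:Reeb-handlebody-emb}(b), we get an embedding $\phi:W\into M$ and a defining form $\beta$ for $\xi$ with $\phi^*\beta=\alpha$. We take $D:=\phi(D_0)$, the canonical section $\DD^2\times\{0\}\subset B_{q_0}$.

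Next we identify the return map. A Reeb orbit starting at $(z,0)$ with $z\in D_e^-$ travels up to $\p^+B_{q_0}$, through $H_e$, and back into $B_{q_0}$ through $\p^-B_{q_0}$. Since the foliation is vertical on the $0$-handle and there are no other handles, it next hits $D_0$ at $\phi_e(z)$. Points outside $D_e^-$ leave $\phi(W)$, and their continuation in $M$ is irrelevant. We must, however, make sure that orbits through $\phi(W)$ do not hit $D$ earlier. This holds because inside $\phi(W)$ the only intersection with $D$ is $D_0$ itself, and orbits from $D_e^-$ stay in $\phi(W)$ until they return. Thus $\Phi|_{D_e^-}=\phi_+$. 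Because $\phi_+$ agrees with $\wt A_\alpha$ on $D_\alpha\supset\wt R_\alpha\supset C^2\cap\wt R_\alpha$, we have $\phi_+(C^2)=C^2$ and $\phi_+|_{C^2}=s_+$, extended via $\kappa$ to all of $\{0,1\}^\Z$. This uses the fact that $A_0,A_1$ are the baker's map branches, so the identity holds on the full square Cantor set and not only on $\Lambda$. Hence $C^2\subset D$ is invariant, and $\Phi|_{C^2}$ is conjugate to the Bernoulli shift via $\kappa$.

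Finally we count orbits. The shift has infinitely many periodic points, for instance the periodic sequences of each minimal period $p$, which exist for every $p\ge1$. A periodic point $x$ of $\Phi$ of minimal period $p$ gives a closed Reeb orbit through $x$. Points in different $s_+$-orbits give geometrically distinct closed Reeb orbits, since a Reeb orbit meets $D$ exactly along the $\Phi$-orbit of any of its points. Distinct minimal periods thus give infinitely many distinct closed orbits.

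The main obstacle is checking that the Reeb orbit of $x$ is genuinely closed in $M$: it returns to the same point of $D$, so it is periodic. Another point to verify is the choice of $D_e^-$: it must contain the whole of $C^2$ and be contained in $\Int\DD^2$, so that $D_e^+$ is an embedded disk in $\Int\DD^2$. This is immediate since $\phi_+$ is a diffeomorphism that is the identity near $\p\DD^2$.
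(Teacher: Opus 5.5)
Your proof is correct. Its second half, the orbit count, is the same as the paper's: periodic points of every minimal period $p$, each suspending to a closed Reeb orbit whose intersection with $D$ is exactly its $\Phi$-orbit.

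The difference is in how you obtain $\Phi|_{C^2}=s_+$. The paper simply observes that $s_+$ is computable and cites Theorem~\ref{thm:main}. You bypass the Turing-machine encoding: you feed a one-vertex, one-self-loop abstract dynamical handlebody with gluing map $\phi_+|_{D_e^-}$ directly into Lemmas~\ref{lem:Reeb-handlebody} and~\ref{lem:Reeb-handlebody-emb}.

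Your route buys something real. Theorem~\ref{thm:main} as stated only controls $\Phi$ on the encoded copy of $\N$, and $s_+$ is not a function $\N\dashrightarrow\N$. So the paper's citation implicitly relies on the construction behind the theorem rather than its statement. For instance, the one-step machine with edges $(0,0,+1)$ and $(1,1,+1)$ from $q_i$ to $q_f$ yields one vertex $q_0$ with two self-loops on $D_0$ and $D_1$. The paper also needs the observation, which you make explicitly, that $\phi_+=\wt A_\alpha$ on $D_\alpha$ gives the full baker's map on all of $C^2\cong\{0,1\}^\Z$, not just on $\Lambda$. That observation is what the invariance and conjugacy claims in the statement require. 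The paper's version is shorter; yours actually justifies that step.

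One minor point about your choice of $D_e^-$. All you need is $C^2\subset D_e^-\subset\Int\DD^2$, for example $D_e^-=\DD^2_r$ with $\sqrt2/2<r<1$. Your two descriptions of $D_e^-$ (containing $D_0\cup D_1$ versus containing the square $[-1/2,1/2]^2$) are not equivalent, since the ellipses stick out of the square. Either choice works, and the phrase about avoiding the support boundary of $\phi_+$ is unnecessary.
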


\begin{proof}
The function $s_+: \Lambda \to \Lambda$ is computable, so by Theorem \ref{thm:main}, there exists a contact form and a section $D \subset M$ such that the first return map $\Phi: D \dashrightarrow D$ of the Reeb flow satisfies $\Phi|_{C^2} = s_+$.

Now, observe that the Bernoulli shift $s_+: C^2 \to C^2$ has periodic points of arbitrarily large period. Indeed, for each $q\geq 1$, consider the two-sided sequence
$t = (t_i)_{i\in\mathbb Z}\in\{0,1\}^{\mathbb Z}$ defined periodically by
\[
  t_i=1 \quad \text{if } i\equiv 0 \pmod q,
  \qquad
  t_i=0 \quad \text{otherwise}.
\]
This sequence satisfies $s_+^k(t) \neq t$ for $1 \leq k < q$ and $s_+^q(t) = t$, so it exactly has period $q$ for the shift $s_+$. %
Each such periodic point suspends to a
closed Reeb orbit on $M$. Furthermore, if the period is $q$, then the corresponding
suspended orbit meets the transverse disk exactly $q$ times before closing;
hence it cannot be an iterate of an orbit with smaller return period.
Since the primitive periods are unbounded, the corresponding simple closed
Reeb orbits are distinct, proving the claim.
\end{proof}

\begin{rem}
The  former result should be compared with the two-or-infinity periodic
orbits theory for Reeb flows in dimension three, initiated by
Hofer--Wysocki--Zehnder \cite{HoferWysockiZehnder1998} and developed
further by Cristofaro-Gardiner--Hutchings--Pomerleano
\cite{CristofaroGardinerHutchingsPomerleano2019} and
Cristofaro-Gardiner--Hryniewicz--Hutchings--Liu
\cite{CristofaroGardinerHryniewiczHutchingsLiu2023}.  Under the
hypotheses of these results, a Reeb flow has either two or infinitely
many simple periodic orbits.  The exceptional case of exactly two simple
orbits is highly rigid: it can occur only on \(S^3\) or on a lens space,
where the two orbits are the core circles of a genus-one Heegaard
splitting \cite{CristofaroGardinerHryniewiczHutchingsLiu2023b}.

Thus Corollary~\ref{cor:infinitely-many-reeb-orbits} is not meant as a
new abstract existence theorem in the range already covered by this
theory.  Its point is different: it gives a prescribed contact form whose
infinitely many simple Reeb orbits arise from an explicit computational
mechanism.  The two-symbol subsystem produces primitive periodic points
of unbounded return period, and their suspensions rule out, by
construction, the exceptional two-orbit dynamics.
\end{rem}

\bibliographystyle{abbrv} %
\bibliography{bibliography}

\end{document}